\newtheorem{definition}{\bfseries Definition}
\newtheorem{proposition}{\bfseries Proposition}
\newtheorem{theorem}{\bfseries Theorem}
\newtheorem{corollary}{\bfseries Corollary}
\newtheorem{lemma}{\bfseries Lemma}
\DeclareMathOperator{\itl}{interval}
\DeclareMathOperator{\proj}{proj}
\def\x{\bm{x}}
\def\y{\bm{y}}
\newcommand{\mt}[1]{\boldsymbol{#1}}
\newcommand{\li}{[\![}
\newcommand{\ri}{]\!]}
\newcommand{\cc}{\mathbf{c}}
\newcommand{\G}{\mathbf{G}}
\newcommand{\A}{\mathbf{A}}
\newcommand{\bb}{\mathbf{b}}
\newcommand{\Zc}{\mathcal{Z}}
\newcommand{\mat}[1]{\begin{bmatrix} #1 \end{bmatrix}}
\newcommand{\hz}[1]{\langle \G^c_{#1},\allowbreak \G^b_{#1},\allowbreak \cc_{#1},\allowbreak \A^c_{#1},\allowbreak \A^b_{#1},\allowbreak \bb_{#1} \rangle}
\newcommand{\lrangle}[1]{\langle #1 \rangle}
\newcommand{\fl}[1]{\lfloor #1 \rfloor}
\newif\ifdraft
\title{\LARGE \bf Efficient Reachability Analysis for Convolutional Neural Networks Using Hybrid Zonotopes}
\author{Yuhao Zhang and Xiangru Xu
\thanks{Y. Zhang and X. Xu are with the Department of Mechanical Engineering, University of Wisconsin-Madison, Madison, WI, USA. Email:         {\tt\small \{yuhao.zhang2,xiangru.xu\}@wisc.edu}.}%
}
\begin{document}
\maketitle
\begin{abstract}
Feedforward neural networks are widely used in autonomous systems, particularly for control and perception tasks within the system loop. However, their vulnerability to adversarial attacks necessitates formal verification before deployment in safety-critical applications. Existing set propagation-based reachability analysis methods for feedforward neural networks often struggle to achieve both scalability and accuracy. This work presents a novel set-based approach for computing the reachable sets of convolutional neural networks. The proposed method leverages a hybrid zonotope representation and an efficient neural network reduction technique, providing a flexible trade-off between computational complexity and approximation accuracy. Numerical examples are presented to demonstrate the effectiveness of the proposed approach.
\end{abstract}

\section{Introduction}\label{sec:intro}

The integration of neural networks into autonomous systems has revolutionized how these systems interact with and respond to their environments. For instance, Fully-connected Feedforward Neural Networks (FFNNs) are commonly used to approximate nonlinear and complex control policies, while Convolutional Neural Networks (CNNs), with their powerful ability to process and interpret intricate visual data, play a critical role in perception tasks. However, deploying neural networks in safety-critical applications introduces significant challenges, particularly due to their vulnerability to adversarial attacks, where minor input perturbations can lead to substantial deviations in output \cite{goodfellow2014explaining,su2019one}.

Recent advancements have focused on reachability-based methods for the formal verification of deep neural networks, leveraging their computational efficiency. Many of these methods focus on analyzing FFNNs by abstracting the non-linear activation functions using various set representations, enabling both forward and backward reachability analysis through set-propagation techniques \cite{everett2021reachability, zhang22constrainedzonotope,rober2023backward,zhang2023backward}. Despite their theoretical soundness, these approaches often face challenges with scalability and flexibility, especially when applied to large-scale FFNNs. Compared to FNNs, CNNs possess more complex architectures, incorporating convolutional layers and max pooling layers. The diversity of layer types and the depth of these architectures present significant challenges in developing efficient and robust verification methods. Although several methods exist for handling CNNs such as \cite{zhang2018efficient,yang2021reachability,katz2019marabou, tran2020verification, santa2022nnlander, katz2022verification, hsieh2022verifying}, only a few are capable of handling real-world networks while providing the flexibility needed for efficient and accurate analysis.

Recently, an approach based on \emph{Hybrid Zonotopes} (HZs) was proposed for the reachability analysis and verification of FFNNs \cite{zhang2023backward, zhang2023reachability, zhang2024reachability,zhang2024hybrid}. With the capability of representing non-convex sets with flat faces \cite{bird2023hybrid}, HZs enable exact abstractions of ReLU-activated FFNNs through simple matrix operations. Additionally, HZs provide a unified tool for handling both non-convex sets and nonlinear plant models. Nevertheless, the aforementioned approaches are limited to FFNNs and also face scalability challenges. 

To address these challenges, this work introduces an efficient set-based method for computing the reachable sets of FFNNs and CNNs. The proposed approach combines HZ representation with an efficient neural network reduction technique, providing the flexibility to balance computational complexity and approximation accuracy. The contributions of this work are twofold: i) A flexible and efficient method is proposed for reducing the number of neurons in a FFNN while preserving its intrinsic input-output mapping properties, with bounded approximation errors. ii) Based on this neuron network reduction, an HZ-based approach is presented for computing the reachable sets of FFNNs and CNNs by transforming various CNN layers into equivalent fully-connected layers. The constructed HZ representations are shown to over-approximate the exact reachable sets, while exact reachability analysis can be recovered as a special case. The performance of the proposed method is demonstrated through numerical examples.

\section{Preliminaries \& Problem Statement}\label{sec:pre}

\emph{Notation.} 
The $i$-th component of a vector $\x\in \mathbb{R}^n$ is denoted by $x_{i}$ with $i\in [n]\triangleq \{1,\dots,n\}$. The entry in the $i$-th row and $j$-th columm of a matrix $\A\in \mathbb{R}^{n\times m}$ is denoted by $\A_{[i,j]}$. The $i$-th row (resp. $j$-th columm) of $\A$ is denoted by $\A_{[i,:]}$ (resp. $\A_{[:,j]}$). For a set $\mathcal{C}\subset[n]$ (resp. $\mathcal{C}'\subset[m]$), $\A_{[\mathcal{C},:]}$ (resp. $\A_{[:,\mathcal{C}']}$) denotes a submatrix of $\A$ with all rows $i\in\mathcal{C}$ (resp. all columns $j\in\mathcal{C}'$). 
Given sets $\mathcal{X}\subset \mathbb{R}^n$, $\mathcal{Z}\subset \mathbb{R}^m$ and a matrix $\bm{R}\in\mathbb{R}^{m\times n}$, 
the generalized intersection of $\mathcal{X}$ and $\mathcal{Z}$ under $\bm R$ is $\mathcal{X} \cap_{\bm R}\mathcal{Z} = \{\x\in\mathcal{X}\;|\;\bm R \x\in\mathcal{Z}\}$. 
An interval with bounds $\underline{\mt{a}}$, $\overline{\mt{a}} \in \mathbb{R}^n$ is denoted as $\li \underline{\mt{a}}, \overline{\mt{a}}\ri$. 
The projection of a set $\mathcal{X}\subset \mathbb{R}^n$ onto a set of coordinates $\Phi = \{i_1,\dots,i_k\} \subset [n]$ is denoted as $\proj_\Phi(\mathcal{X}) \triangleq \{[\bm{e}_{i_1}\; \;\cdots\; \; \bm{e}_{i_k}]^\top \bm x\; |\; \bm x \in \mathcal{X}\} \subset \mathbb{R}^k$. 
The cardinality of the set $\Phi$ is denoted as $|\Phi|$. 
Given a vector $\x\in \mathbb{R}^n$, the maxout function is $\max(\x) \triangleq \max_{i\in[n]} x_i$.

First, we provide the definition of hybrid zonotope. 
\begin{definition}\cite[Definition 3]{bird2023hybrid}\label{def:sets}
The set $\Zc \subset\mathbb{R}^n$ is a hybrid zonotope if there exist $\mathbf{c} \in \mathbb{R}^{n}$, $\mathbf{G}^c \in \mathbb{R}^{n \times n_{g}}$, $\G^b\in \mathbb{R}^{n \times n_{b}}$, $\A^c \in \mathbb{R}^{n_{c}\times{n_g}}$, $\A^b \in\mathbb{R}^{n_{c}\times{n_b}}$, $\bb \in \mathbb{R}^{n_{c}}$ such that $\mathcal{Z}=\{ \G^c  \bm{\xi}^c+\G^b \bm{\xi}^b+\cc \mid 
\bm{\xi}^c\in \mathcal{B}_{\infty}^{n_{g}}, \bm{\xi}^b\in\{-1,1\}^{n_{b}}, 
\A^c\bm{\xi}^c+\A^b \bm{\xi}^b=\bb
\}$ where $\mathcal{B}_{\infty}^{n_g}=\left\{\bm{x} \in \mathbb{R}^{n_g} \;|\;\|\bm x\|_{\infty} \leq 1\right\}$ is the unit hypercube in $\mathbb{R}^{n_{g}}$. The \emph{HCG}-representation of $\Zc$  is given by $\mathcal{Z}= \hz{}$.
\end{definition}

Next, we introduce the notations related to FFNNs, which consist exclusively of fully-connected layers. Let $\mt{\pi}_{F}:\mathbb{R}^n \rightarrow \mathbb{R}^m$ be an $\ell$-layer FFNN with weight matrices $\{\bm W^{(k)}\}_{k\in[\ell]}$ and bias vectors $\{\bm v^{(k)}\}_{k\in[\ell]}$. Denote $\bm{x} \in \mathbb{R}^{n}$ as the input of $\mt\pi_{F}$ and $\bm x^{(k)}\in \mathbb{R}^{n_k}$ as the neurons of the $k$-th layer. Then, 
\begin{subequations}\label{equ:FFNN}
\begin{align}
    \bm x^{(0)} & = \bm{x},\\
    \bm x^{(k)} & = \mathcal{L}_{fc}(\bm W^{(k)},\bm v^{(k)}, \bm x^{(k-1)}), \text{ for } k \in [\ell-1],\\
    \mt{\pi}_{F}(\bm{x}) & = \bm W^{(\ell)}\bm x^{(\ell-1)}+ \bm v^{(\ell)},
\end{align}    
\end{subequations}
where $$
\mathcal{L}_{fc}(\bm W^{(k)},\bm v^{(k)}, \bm x^{(k-1)}) \triangleq \mt{\sigma}(\bm W^{(k)}\bm x^{(k-1)}+ \bm v^{(k)})
$$   
and $\mt{\sigma}$ is the vector-valued activation function constructed by component-wise repetition of the activation function $\sigma(\cdot)$, i.e., $\mt{\sigma}(\bm z) \triangleq [\sigma(z_1) \;\cdots\; \sigma(z_{n_k})]^\top.$ In this work, $\sigma$ is assumed to be the ReLU function, but the proposed method can be easily extended to other activation functions (such as sigmoid and
tanh) by employing their HZ approximations  \cite{zhang2024hybrid}. 

Given an input set $\mathcal{Z} \subset\mathbb{R}^n$ of the FFNN $\mt{\pi}_F$, the (output) \emph{reachable set} of $\mathcal{Z}$ is defined as 
$$
\mathcal{R}_{\mt{\pi}_F}(\mathcal{Z}) \triangleq \{\bm{z}\in \mathbb{R}^{m}\;|\; \bm z = \mt{\pi}_F(\bm x), \bm x \in\mathcal{Z}\}$$ 
and the \emph{graph set} of $\mt{\pi}_F$ over $\mathcal{Z}$ is defined as 
$$\mathcal{G}_{\mt{\pi}_F}(\mathcal{Z}) \triangleq \{(\bm{x},\bm{z})\in \mathbb{R}^{n+m}\;|\; \bm z = \mt{\pi}_F(\bm x), \bm x \in\mathcal{Z}\}.
$$ 

Now we introduce notations related to CNNs, which combine fully connected layers with non-fully-connected layers. Let $\mt{\pi}_C:\mathbb{R}^{c_I \times h_I\times w_I} \rightarrow \mathbb{R}^m$ be an $\ell$-layer CNN with parameters $\Theta \triangleq (\bm\theta^{(1)}, \bm\theta^{(2)}, \dots, \bm\theta^{(\ell)})$, where $\bm\theta^{(k)}$ denote the learnable parameters and hyperparameters associated with the $k$-th layer, for $k\in [\ell]$. We assume $\mt{\pi}_C$ consists of the following commonly-used layer types: \emph{convolution layer} $\mathcal{L}_{conv}$, \emph{average pooling layer} $\mathcal{L}_{ap}$, \emph{max pooling layer} $\mathcal{L}_{mp}$, and \emph{fully-connected layer} $\mathcal{L}_{fc}$. 
Given an image input to the CNN $\mt{\pi}_C$ as $I\in\mathbb{R}^{c_I \times h_I\times w_I}$ with width $w_I$, height $h_I$ and number of channels $c_I$, the output of $\mt{\pi}_C$ is computed as 
\begin{subequations}\label{equ:CNN}
\begin{align}
 \bm{x}^{(0)} & = I, \\
 \bm{x}^{(k)} & = \mathcal{L}^{(k)}(\bm{\theta}^{(k)}, \bm{x}^{(k-1)}), \text{ for } k \in [\ell],\\
\mt{\pi}_C(I) & = \bm{x}^{(\ell)},
\end{align}
\end{subequations}
where $\mathcal{L}^{(k)} \in \{\mathcal{L}_{conv}, \mathcal{L}_{ap}, \mathcal{L}_{mp}, \mathcal{L}_{fc}\}$, for $k \in [\ell]$. Depending on the type of the layer, the neurons of the $k$-th layer $\x^{(k)}$ is either a 3D tensor or a vector, i.e., $\x^{(k)} \in \mathbb{R}^{c_k\times h_k \times w_k}$ for $\mathcal{L}^{(k)} \in \{\mathcal{L}_{conv}, \mathcal{L}_{ap}, \mathcal{L}_{mp}\}$ and $\x^{(k)} \in \mathbb{R}^{n_k}$ for $\mathcal{L}^{(k)} = \mathcal{L}_{fc}$. 
The associated parameters $\bm{\theta}^{(k)}$ may include learnable parameters such as weights $\bm{W}^{(k)}$ and bias $\bm{v}^{(k)}$, as well as hyperparameters such as padding sizes $(p_h^{(k)}, p_w^{(k)})$ and strides $(s_h^{(k)},s_w^{(k)})$. 

Given an input set $\mathcal{I} \subset \mathbb{R}^{c_I \times h_I\times w_I}$ to $\mt{\pi}_C$, which can be considered as a set of perturbed images, the reachable set of $\mt{\pi}_C$ that contains all possible outputs is defined as 
$$
\mathcal{R}_{\mt{\pi}_C}(\mathcal{I}) \triangleq \{\bm{y}\in \mathbb{R}^{m}\;|\; \bm y = \mt{\pi}_C(\bm x), \bm x \in\mathcal{I}\}
$$ 
and the graph set of $\mt{\pi}_C$ over $\mathcal{I}$ is defined as 
$$
\mathcal{G}_{\mt{\pi}_C}(\mathcal{I}) \triangleq \{(\bm{x},\bm{y})\in \mathbb{R}^{c_I \times h_I\times w_I+m}\;|\; \bm y = \mt{\pi}_C(\bm x), \bm x \in\mathcal{I}\}.$$ 

In this work, we aim to develop an efficient HZ-based approach for computing an over-approximated reachable set, $\hat{\mathcal{R}}_{{\mt{\pi}}_C}(\mathcal{I})$, for a given CNN $\mt{\pi}_C$ and an input set $\mathcal{I}$, such that $\hat{\mathcal{R}}_{{\mt{\pi}}_C}(\mathcal{I}) \supseteq \mathcal{R}_{\mt{\pi}_C}(\mathcal{I})$. This method provides the flexibility to balance between computational efficiency and approximation accuracy. To that end, we first transform the CNN $\mt{\pi}_C$ into an equivalent FFNN ${\mt{\pi}}_F$ (see Section \ref{subsec:CNN}), and then develop a flexible NN reduction method for ${\mt{\pi}}_F$ that enables the computation of a closed-form over-approximation $\hat{\mathcal{R}}_{{\mt{\pi}}_F}(\mathcal{I})$, which is equivalent to $\hat{\mathcal{R}}_{{\mt{\pi}}_C}(\mathcal{I})$   (see Section \ref{sec:FFNN}).

\section{Transformation From CNN Into FFNN }\label{subsec:CNN}

In this section, we convert  each of the convolution and pooling layers  in $\mt{\pi}_C$ into a fully connected layer, such that a given CNN $\mt{\pi}_C:\mathbb{R}^{c_I \times h_I\times w_I} \rightarrow \mathbb{R}^m$ is transformed into an equivalent FFNN ${\mt{\pi}}_F: \mathbb{R}^{c_I \cdot h_I \cdot w_I} \rightarrow \mathbb{R}^m$. 
The definitions of convolution layers and pooling layers are based on \cite[Chapter 9]{Goodfellow2016deep}.

\subsection{Convolution Layer}

The convolution layer of a CNN is defined below.

\begin{definition}\label{def:conv}
Given an input $\x \in \mathbb{R}^{c_x \times h_x \times w_x}$ to the convolution layer $\mathcal{L}_{conv}$ of a CNN with activation function $\sigma: \mathbb{R} \rightarrow \mathbb{R}$ and parameters $\bm{\theta}_{conv} = (\bm{W}, \bm{v}, p_h, p_w, s_h, s_w)$, where $\bm{W} \in \mathbb{R}^{f_W \times c_x \times h_W \times w_W}$ are the filter weights with $f_W$ denoting the number of filters, $\bm{v} \in \mathbb{R}^{f_W \times 1 \times 1}$ is the bias, $p_w \in \mathbb{R}$ and $p_h \in \mathbb{R}$ are the padding sizes along the width and height, respectively, and $s_w \in \mathbb{R}$ and $s_h \in \mathbb{R}$ are the strides along the width and height, the output of the convolution layer, $\bm{y} = \mathcal{L}_{conv} (\bm{\theta}_{conv}, \x) \in \mathbb{R}^{c_y \times h_y \times w_y}$, is computed as follows:
    \begin{equation}\label{equ:conv-layer}
        \begin{aligned}
            y_{[i_{c_y},i_{h_y},i_{w_y}]}  =  \sigma( & \sum_{i_{c_x} = 1}^{c_x}\sum_{i_{h_W} = 1}^{h_W} \sum_{i_{w_W} = 1}^{w_W} v_{[i_{c_y},1,1]} \;+ \\ & W_{[i_{c_y},i_{c_x},i_{h_W},i_{w_W}]}\cdot \bar x_{[i_{c_x}, j_h, j_w]}),
        \end{aligned}
    \end{equation}
    where $i_{c_y} \in [c_y], i_{h_y} \in [h_y], i_{w_y} \in [w_y]$,  
    \begin{align*}
        j_h & = i_{h_W}+(i_{h_y}-1)s_h-p_h,\\
        j_w &= i_{w_W}+(i_{w_y}-1)s_w-p_w,
    \end{align*}
    and 
    \begin{align*}
        \bar x_{[i_{c_x}, j_h, j_w]} = \left\{ \begin{array}{ll}
x_{[i_{c_x}, j_h, j_w]}, & \mbox{if } ( j_h \in [h_x]) \wedge (  j_w \in [w_x]),\\ 
0, & \mbox{otherwise.}
\end{array}\right.
    \end{align*}
    The number of output channels is $c_y = f_W$, while the height and width of the output are $h_y = \fl{\frac{h_x-h_W+p_h+s_h}{s_h}}$ and $w_y = \fl{\frac{w_x-w_W+p_w+s_w}{s_w}}$, respectively.
\end{definition}

From the definition above, it can be observed that the convolution layer consists of a linear mapping of the tensor input $\x$, weights $\bm W$,  bias $ \bm v$, and a nonlinear activation function $\sigma$. 
The following lemma shows that the convolution layer can be transformed into an equivalent fully-connected layer by flattening the input $\x$ and output $\bm y$ into vectors following the row-major order:
\begin{subequations}\label{equ:flatten}
\begin{align}
    \tilde{\x}  & \triangleq \mat{  x_{[1,1,1]} & x_{[1,1,2]} & \cdots & x_{[c_x,h_x,w_x]} }^\top, \\ 
    \tilde{\y}  & \triangleq \mat{  y_{[1,1,1]} & y_{[1,1,2]} & \cdots & y_{[c_y,h_y,w_y]} }^\top.
\end{align}
\end{subequations}

\begin{lemma}\label{lem:conv}
Given a convolution layer $\mathcal{L}_{conv}$ with activation function $\sigma: \mathbb{R} \rightarrow \mathbb{R}$ and parameters $\bm{\theta}_{conv} = (\bm{W}, \bm{v}, p_h, p_w, s_h, s_w)$, let ${\x}\in \mathbb{R}^{c_x \times h_x \times w_x}$ and ${\y}\in \mathbb{R}^{c_y \times h_y \times w_y}$ represent the input and the corresponding output of  $\mathcal{L}_{conv}$, and let $\tilde{\x}\in \mathbb{R}^{c_x \cdot h_x \cdot w_x}$ and $\tilde{\y}\in \mathbb{R}^{c_y \cdot h_y \cdot w_y}$ denote the flattened input and output vectors defined in \eqref{equ:flatten}, respectively. Then,
\begin{align}
\tilde{\y} = \bm \sigma(\tilde {\bm W}\cdot \tilde{\x} + \tilde {\bm v})
\end{align}
where 
\begin{align*}
\tilde {\bm W} &= \tilde {\bm W}^{conv}\cdot \tilde {\bm W}^p,\\
\tilde {\bm v} &= [\bm v_{[1,1,1]}\;\cdots\;\bm v_{[c_y,1,1]}]^\top \otimes \bm 1_{h_y\cdot w_y \times 1},    
\end{align*}
$\bm \sigma$ is the vector-valued activation function, $\otimes$ is the Kronecker product, and 
matrices $\tilde {\bm W}^p \in \mathbb{R}^{(c_x \cdot (h_x+2p_h) \cdot (w_x+2p_w)) \times (c_x \cdot h_x \cdot w_x)}$ and $\tilde {\bm W}^{conv}\in \mathbb{R}^{ (c_y \cdot h_y \cdot w_y)\times (c_x \cdot (h_x+2p_h) \cdot (w_x+2p_w))}$ are zero matrices except for the following entries:
\begin{align*}
& \tilde {\bm W}^p_{[\mathfrak i_{row},\mathfrak i_{col}]} = 1,\\
& \tilde {\bm W}^{conv}_{[\mathfrak j_{row},\mathfrak j_{col}]} = \bm{W}_{[i_{c_y},i_{c_x},i_{h_W},i_{w_W}]},
\end{align*}
with 
\begin{align*}
\mathfrak i_{row} &=   (i_{c_x}\!-\!1)\!\cdot\!(h_x\!+\!2p_h) \!\cdot\! (w_x\!+\!2p_w) \!+\! p_h\!\cdot\!(w_x\!+\!2p_w) \\ &\!+\! (i_{h_x}\!-\!1)\!\cdot\!(w_x\!+\!2p_w) \!+\! p_w \!+\! i_{w_x},\\
\mathfrak i_{col} &=  (i_{c_x}\!-\!1)\!\cdot\! h_x\!\cdot\! w_x \!+\!  (i_{h_x}\!-\!1)\!\cdot\! w_x \!+\! i_{w_x},\\
\mathfrak j_{row} &=  (i_{c_y}\!-\!1) \!\cdot\! h_y \!\cdot\!w_y \!+\! (i_{h_y}\!-\!1)\!\cdot\! w_y \!+\!i_{w_y},\\
\mathfrak j_{col} &=  (i_{c_x}\!-\!1)\!\cdot\!(h_x\!+\!2p_h) \!\cdot\! (w_x\!+\!2p_w) \!+\! (i_{w_y}\!-\!1) \!\cdot\! s_w \!+\! i_{w_W} \\ & \!+\! (i_{h_W}\!-\!1)\!\cdot\! (w_x\!+\!2p_w) \!+\! (i_{h_y}\!-\!1)\!\cdot\! (w_x\!+\!2p_w)\!\cdot\! s_h ,
\end{align*}
for any $i_{c_x}\in[{c_x}],  i_{h_x}\in[{h_x}], i_{w_x}\in[{w_x}], i_{c_y}\in[{c_y}],  i_{h_y}\in[{h_y}], i_{w_y}\in[{w_y}], i_{h_W}\in[{h_W}], i_{w_W}\in[{w_W}].$
\end{lemma}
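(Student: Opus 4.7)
The plan is to reduce the claim to three index-tracking verifications, exploiting the fact that the activation $\sigma$ is applied componentwise: it suffices to show that the pre-activation argument $\tilde{\bm W}\tilde{\x} + \tilde{\bm v}$ equals the flattening (in row-major order) of the tensor whose entry at position $(i_{c_y},i_{h_y},i_{w_y})$ is the expression inside $\sigma(\cdot)$ in \eqref{equ:conv-layer}. I would first set up the bookkeeping: for any tensor $A\in\mathbb{R}^{C\times H\times W}$ flattened in row-major order into $\tilde{A}\in\mathbb{R}^{C\cdot H\cdot W}$, the entry $A_{[c,h,w]}$ is placed at index $(c-1)HW + (h-1)W + w$. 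All of the quantities $\mathfrak{i}_{row},\mathfrak{i}_{col},\mathfrak{j}_{row},\mathfrak{j}_{col}$ in the lemma are instances of this formula applied to the input tensor, the zero-padded input tensor $\bar{\bm x}\in\mathbb{R}^{c_x\times(h_x+2p_h)\times(w_x+2p_w)}$, and the output tensor.

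Next I would factor the transformation into padding followed by a strided linear map. For the padding step, let $\tilde{\bar{\x}}\in\mathbb{R}^{c_x(h_x+2p_h)(w_x+2p_w)}$ be the row-major flattening of $\bar{\bm x}$. The assertion is that $\tilde{\bar{\x}} = \tilde{\bm W}^p\,\tilde{\x}$. A direct check shows that for each input position $(i_{c_x},i_{h_x},i_{w_x})$ the formula for $\mathfrak{i}_{row}$ is exactly the flattened index of $\bar x_{[i_{c_x},i_{h_x}+p_h,i_{w_x}+p_w]}$ in $\tilde{\bar{\x}}$, while $\mathfrak{i}_{col}$ is the flattened index of $x_{[i_{c_x},i_{h_x},i_{w_x}]}$ in $\tilde{\x}$; since $\tilde{\bm W}^p$ has a single $1$ at $(\mathfrak{i}_{row},\mathfrak{i}_{col})$ for each such index pair, multiplication reproduces the non-zero interior of $\bar{\bm x}$ and the remaining rows of $\tilde{\bm W}^p$ are identically zero, which recovers the padded zeros.

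Then for the convolution step I would verify that $(\tilde{\bm W}^{conv}\,\tilde{\bar{\x}})_{[\mathfrak{j}_{row}]}$ equals the triple sum on the right-hand side of \eqref{equ:conv-layer} (excluding the bias). The key observation is that substituting $i_{c_x}$ for the channel and $(i_{h_W}+(i_{h_y}-1)s_h,\, i_{w_W}+(i_{w_y}-1)s_w)$ for the spatial indices in the row-major formula for $\bar{\bm x}$ produces precisely $\mathfrak{j}_{col}$ (after distributing $(i_{h_W}+(i_{h_y}-1)s_h-1)\cdot(w_x+2p_w)$). Thus each nonzero entry $\tilde{\bm W}^{conv}_{[\mathfrak{j}_{row},\mathfrak{j}_{col}]} = \bm W_{[i_{c_y},i_{c_x},i_{h_W},i_{w_W}]}$ multiplies exactly $\bar x_{[i_{c_x},\,i_{h_W}+(i_{h_y}-1)s_h-p_h,\,i_{w_W}+(i_{w_y}-1)s_w-p_w]} = \bar x_{[i_{c_x},j_h,j_w]}$, and summing over $i_{c_x},i_{h_W},i_{w_W}$ reconstructs the convolution sum. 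Composing with the padding map gives $\tilde{\bm W} = \tilde{\bm W}^{conv}\tilde{\bm W}^{p}$.

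Finally, the bias term in \eqref{equ:conv-layer} is $v_{[i_{c_y},1,1]}$, which is independent of $(i_{h_y},i_{w_y})$. The Kronecker product $[v_{[1,1,1]}\,\cdots\,v_{[c_y,1,1]}]^\top\otimes\bm 1_{h_y w_y\times 1}$ produces a vector of length $c_y h_y w_y$ whose block of $h_y w_y$ consecutive entries starting at position $(i_{c_y}-1)h_y w_y + 1$ is $v_{[i_{c_y},1,1]}$ repeated, matching the row-major index $\mathfrak{j}_{row}$ of the output. Applying $\bm\sigma$ componentwise to the pre-activation vector yields $\tilde{\y}$, completing the proof. The only real obstacle is the index arithmetic in the convolution step, which I expect to handle by writing both $\mathfrak{j}_{col}$ and the flattened padded index in the factored form $(i_{c_x}-1)(h_x+2p_h)(w_x+2p_w) + (\text{row offset})(w_x+2p_w) + (\text{column offset})$ and matching the offsets term-by-term; everything else is mechanical.
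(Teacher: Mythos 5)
Your proposal is correct, and it follows exactly the structure that the lemma statement itself prescribes (the paper states Lemma~\ref{lem:conv} without proof): factor the map as zero-padding $\tilde{\bm W}^p$ followed by the strided linear combination $\tilde{\bm W}^{conv}$, and verify by row-major index bookkeeping that $\mathfrak i_{row},\mathfrak i_{col},\mathfrak j_{row},\mathfrak j_{col}$ place each weight against the correct padded entry $\bar x_{[i_{c_x},j_h,j_w]}$; your expansion of $(i_{h_W}+(i_{h_y}-1)s_h-1)(w_x+2p_w)$ is the right way to match $\mathfrak j_{col}$. One small point worth a remark: read literally, \eqref{equ:conv-layer} places $v_{[i_{c_y},1,1]}$ inside the triple sum, which would add the bias $c_x\cdot h_W\cdot w_W$ times, whereas $\tilde{\bm v}$ adds it once per output entry --- your proof silently adopts the (clearly intended) single-addition reading, which is what makes the bias block argument via the Kronecker product go through.
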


Using Lemma \ref{lem:conv}, the convolution layers in a CNN can be transformed into fully-connected layers with flattened input $\tilde x$, output $\tilde y$, weight matrix $\tilde {\bm W}$ and bias $\tilde {\bm v}$. Although $\tilde {\bm W}$ is a large matrix, the majority of its entries are zeros, allowing it to be stored in a sparse matrix format to reduce the computational overhead.

\subsection{Average Pooling Layer}
The average pooling layer of a CNN is defined as follows.
\begin{definition}\label{def:avg-pool}
Given an input $\x \in \mathbb{R}^{c_x \times h_x \times w_x}$ to the average pooling layer $\mathcal{L}_{ap}$ with parameters $\bm{\theta}_{ap} = (h_{p}, w_{p}, p_h, p_w, s_h, s_w)$, where $h_{p}\in \mathbb{R}$ and $w_{p}\in \mathbb{R}$ are the pooling window sizes, $p_w \in \mathbb{R}$ and $p_h \in \mathbb{R}$ are the padding sizes, and $s_w \in \mathbb{R}$ and $s_h \in \mathbb{R}$ are the strides along the width and height, respectively, the output of the averaging pooling layer,  $\bm{y} = \mathcal{L}_{ap} (\bm{\theta}_{ap}, \x) \in \mathbb{R}^{c_x \times h_y \times w_y}$, is computed as follows:
\begin{align}\label{equ:avg-pool}
        \y_{[i_{c_x},i_{h_y},i_{w_y}]} = ({\sum_{i_{h_p} = 1}^{h_p} \sum_{i_{w_p} = 1}^{w_p} \bar x_{[i_{c_x},j_h,j_w]}})/{(h_p\cdot w_p)},
    \end{align}
        where $i_{c_x} \in [c_x], i_{h_y} \in [h_y], i_{w_y} \in [w_y]$,  
    \begin{align*}
        j_h & = i_{h_p}+(i_{h_y}-1)s_h-p_h,\\
        j_w &= i_{w_p}+(i_{w_y}-1)s_w-p_w,
    \end{align*}
    and 
    \begin{align*}
        \bar x_{[i_{c_x}, j_h, j_w]} = \left\{ \begin{array}{ll}
x_{[i_{c_x}, j_h, j_w]}, & \mbox{if } ( j_h \in [h_x]) \wedge (  j_w \in [w_x]),\\ 
0, & \mbox{otherwise.}
\end{array}\right.
    \end{align*}
    The height and width of the output are $h_y = \fl{\frac{h_x-h_p+p_h+s_h}{s_h}}$ and $w_y = \fl{\frac{w_x-w_p+p_w+s_w}{s_w}}$, respectively.
\end{definition}

Similar to the convolution layer, the average pooling layer can be transformed into an equivalent fully-connected layer as shown in the following lemma.

\begin{lemma}\label{lem:avg-pool}
Given an average pooling layer $\mathcal{L}_{ap}$ with parameters $\bm{\theta}_{ap} = (h_{p}, w_{p}, p_h, p_w, s_h, s_w)$, let ${\x}\in \mathbb{R}^{c_x \times h_x \times w_x}$ and ${\y}\in \mathbb{R}^{c_x \times h_y \times w_y}$ represent the input and the corresponding output of  $\mathcal{L}_{ap}$, and let $\tilde{\x}\in \mathbb{R}^{c_x \cdot h_x \cdot w_x}$ and $\tilde{\y}\in \mathbb{R}^{c_x \cdot h_y \cdot w_y}$ denote the flattened input and output vectors defined in \eqref{equ:flatten}, respectively. Then,
\begin{align}
\tilde{\y} = \tilde {\bm W}^{ap}\cdot \tilde{\x}
\end{align}
where 
$\tilde {\bm W}^{ap} =  {\bm W}^{ap}\cdot \tilde {\bm W}^p,
$ 
the matrix $\tilde {\bm W}^p \in \mathbb{R}^{(c_x \cdot (h_x+2p_h) \cdot (w_x+2p_w)) \times (c_x \cdot h_x \cdot w_x)}$ is the same as that defined in Lemma \ref{lem:conv}, and $ {\bm W}^{ap}\in \mathbb{R}^{ (c_x \cdot h_y \cdot w_y)\times (c_x \cdot (h_x+2p_h) \cdot (w_x+2p_w))}$ is a zero matrix except for the following entries:
$
{\bm W}^{ap}_{[\mathfrak j_{row},\mathfrak j_{col}]} = \frac{1}{h_p \cdot w_p}
$ 
with
\begin{align*}
\mathfrak j_{row} = & (i_{c_x}\!-\!1) \!\cdot\! h_y \!\cdot\!w_y \!+\! (i_{h_y}\!-\!1)\!\cdot\! w_y \!+\!i_{w_y},\\
\mathfrak j_{col} = & (i_{c_x}\!-\!1)\!\cdot\!(h_x\!+\!2p_h) \!\cdot\! (w_x\!+\!2p_w) \!+\! (i_{w_y}\!-\!1) \!\cdot\! s_w \!+\! i_{w_p} \\ & \!+\! (i_{h_p}\!-\!1)\!\cdot\! (w_x\!+\!2p_w) \!+\! (i_{h_y}\!-\!1)\!\cdot\! (w_x\!+\!2p_w)\!\cdot\! s_h,
\end{align*}
for any $i_{c_x}\in[{c_x}], i_{h_y} \in[{h_y}], i_{w_y}\in[{w_y}], i_{h_p}\in[{h_p}], i_{w_p}\in[{w_p}]$.
    
\end{lemma}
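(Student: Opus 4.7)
The plan is to mirror the argument structure of Lemma \ref{lem:conv}, exploiting the fact that average pooling is purely linear (no activation, no filter weights beyond the constant $1/(h_p\cdot w_p)$, and channels are processed independently rather than mixed). I would decompose the map into two stages: first pad the flattened input using the matrix $\tilde{\bm W}^p$ already introduced in Lemma \ref{lem:conv}, producing a flattened version of the zero-padded tensor $\bar{\x}\in\mathbb{R}^{c_x\times(h_x+2p_h)\times(w_x+2p_w)}$; then collapse each pooling window via the sparse matrix $\bm W^{ap}$ whose nonzero entries are all equal to $1/(h_p\cdot w_p)$.

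The first step is essentially free: in the proof of Lemma \ref{lem:conv}, the matrix $\tilde{\bm W}^p$ was constructed so that $(\tilde{\bm W}^p\tilde{\x})_{\mathfrak i_{row}} = x_{[i_{c_x},i_{h_x},i_{w_x}]}$ at the interior indices corresponding to $(i_{c_x},p_h+i_{h_x},p_w+i_{w_x})$ of the padded tensor, with zeros everywhere else. Since padding in the average pooling definition is identical to that in the convolution definition (the same $\bar x_{[i_{c_x},j_h,j_w]}$ construction), the padded, row-major-flattened tensor $\tilde{\bar{\x}}$ equals $\tilde{\bm W}^p\tilde{\x}$. Hence it suffices to prove $\tilde{\y} = \bm W^{ap}\,\tilde{\bar{\x}}$.

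For the second step, I would fix arbitrary output indices $(i_{c_x},i_{h_y},i_{w_y})$ and compute the corresponding row of $\bm W^{ap}\,\tilde{\bar{\x}}$. The row index $\mathfrak j_{row} = (i_{c_x}-1)\cdot h_y\cdot w_y + (i_{h_y}-1)\cdot w_y + i_{w_y}$ matches the row-major flattening position of $\y_{[i_{c_x},i_{h_y},i_{w_y}]}$ in $\tilde{\y}$. By the definition of $\bm W^{ap}$, this row has exactly $h_p\cdot w_p$ nonzero entries, each equal to $1/(h_p\cdot w_p)$, at column indices $\mathfrak j_{col}$ parametrized by $(i_{h_p},i_{w_p})\in[h_p]\times[w_p]$. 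The heart of the proof is verifying that each such $\mathfrak j_{col}$ is precisely the row-major flattening index in the padded tensor of the entry $\bar x_{[i_{c_x},j_h,j_w]}$ with $j_h=i_{h_p}+(i_{h_y}-1)s_h$ and $j_w=i_{w_p}+(i_{w_y}-1)s_w$ (i.e., the padded coordinates before subtracting $p_h,p_w$, since the padding offset is baked into the flattening of $\bar{\x}$ rather than into the window shift). Substituting these into the flattening formula $\mathfrak j_{col} = (i_{c_x}-1)(h_x+2p_h)(w_x+2p_w) + (j_h-1)(w_x+2p_w) + j_w$ and expanding $(j_h-1)(w_x+2p_w) = (i_{h_p}-1)(w_x+2p_w)+(i_{h_y}-1)s_h(w_x+2p_w)$ and $j_w = i_{w_p} + (i_{w_y}-1)s_w$ reproduces exactly the formula stated in the lemma. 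Summing over $(i_{h_p},i_{w_p})$ then gives $(1/(h_p\cdot w_p))\sum_{i_{h_p}}\sum_{i_{w_p}} \bar x_{[i_{c_x},j_h,j_w]}$, matching \eqref{equ:avg-pool}.

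The routine calculations are the index arithmetic; the main obstacle (and the only place a genuine error could slip in) is the bookkeeping between the three coordinate systems involved: original tensor coordinates, padded tensor coordinates, and flattened row-major indices. I would handle this by proving once, as a short auxiliary identity, that for any tensor $\bm T\in\mathbb{R}^{C\times H\times W}$ the row-major flattening maps $(c,h,w)$ to $(c-1)HW+(h-1)W+w$, and then applying it uniformly to both $\x$ (dimensions $c_x,h_x,w_x$) and $\bar{\x}$ (dimensions $c_x,h_x+2p_h,w_x+2p_w$). Everything else reduces to substitution.
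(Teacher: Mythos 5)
Your proposal is correct, and the paper in fact states Lemma~\ref{lem:avg-pool} without any proof, so your argument supplies exactly the routine verification the authors leave implicit. The two-stage decomposition you use (pad via $\tilde{\bm W}^p$, then average via the sparse matrix $\bm W^{ap}$) is precisely the factorization $\tilde{\bm W}^{ap}=\bm W^{ap}\cdot\tilde{\bm W}^p$ built into the statement itself, and your index bookkeeping checks out: the padded-tensor coordinates of the window entry are $(i_{c_x},\,i_{h_p}+(i_{h_y}-1)s_h,\,i_{w_p}+(i_{w_y}-1)s_w)$, whose row-major flattening expands to the stated $\mathfrak j_{col}$, while $\mathfrak j_{row}$ matches the flattening of $y_{[i_{c_x},i_{h_y},i_{w_y}]}$, so each row of $\bm W^{ap}$ picks out the $h_p\cdot w_p$ window entries with weight $1/(h_p\cdot w_p)$ as required by \eqref{equ:avg-pool}.
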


\subsection{Max Pooling Layer}
The max pooling layer of a CNN is defined as follows.
\begin{definition}\label{def:max-pool} 
Given an input $\x \in \mathbb{R}^{c_x \times h_x \times w_x}$ to the max pooling layer $\mathcal{L}_{mp}$ with parameters $\bm{\theta}_{mp} = (h_{p}, w_{p}, p_h, p_w, s_h, s_w)$, where $h_{p}\in \mathbb{R}$ and $w_{p}\in \mathbb{R}$ are the pooling window sizes, $p_w \in \mathbb{R}$ and $p_h \in \mathbb{R}$ are the padding sizes, and $s_w \in \mathbb{R}$ and $s_h \in \mathbb{R}$ are the strides along the width and height, respectively, the output of the max pooling layer, $\bm{y} = \mathcal{L}_{mp} (\bm{\theta}_{mp}, \x) \in \mathbb{R}^{c_x \times h_y \times w_y}$, is computed  as follows:
\begin{align}\label{equ:max-pool}
    \y_{[i_{c_x},i_{h_y},i_{w_y}]} = \max_{i_{h_p} \in [h_p], i_{w_p} \in [w_p]} \bar x_{[i_{c_x},j_h,j_w]},
\end{align}
where $i_{c_x} \in [c_x], i_{h_y} \in [h_y], i_{w_y} \in [w_y]$, 
\begin{align*}
    j_h & = i_{h_p}+(i_{h_y}-1)s_h-p_h,\\
    j_w &= i_{w_p}+(i_{w_y}-1)s_w-p_w,
\end{align*}
and 
\begin{align*}
    \bar x_{[i_{c_x}, j_h, j_w]} = \left\{ \begin{array}{ll}
x_{[i_{c_x}, j_h, j_w]}, & \mbox{if } ( j_h \in [h_x]) \wedge (  j_w \in [w_x]),\\ 
0, & \mbox{otherwise.}
\end{array}\right.
    \end{align*}
    The height and width of the output are $h_y = \fl{\frac{h_x-h_p+p_h+s_h}{s_h}}$ and $w_y = \fl{\frac{w_x-w_p+p_w+s_w}{s_w}}$, respectively.
\end{definition}

The max pooling layer can be transformed into an equivalent fully-connected layer with the maxout activation function, as shown in the following lemma.

\begin{lemma}\label{lem:max-pool}
Given a max pooling layer $\mathcal{L}_{mp}$ with parameters $\bm{\theta}_{mp} = (h_{p}, w_{p}, p_h, p_w, s_h, s_w)$, let ${\x}\in \mathbb{R}^{c_x \times h_x \times w_x}$ and ${\y}\in \mathbb{R}^{c_x \times h_y \times w_y}$ represent the input and the corresponding output of $\mathcal{L}_{mp}$, and let $\tilde{\x}\in \mathbb{R}^{c_x \cdot h_x \cdot w_x}$ and $\tilde{\y}\in \mathbb{R}^{c_x \cdot h_y \cdot w_y}$ denote the flattened input and output vectors defined in \eqref{equ:flatten}, respectively. Then,
\begin{equation}\label{equ:max-pool2}
\begin{aligned}
\tilde{\y} = [ & \max(\bm W^{mp}_{1,1,1}\cdot \tilde{\bm W}^p\cdot \tilde{\x}) \;\; \max(\bm W^{mp}_{1,1,2}\cdot \tilde{\bm W}^p\cdot \tilde{\x}) \\ & \cdots \;\; \max(\bm W^{mp}_{c_x,h_y,w_y}\cdot \tilde{\bm W}^p\cdot \tilde{\x})]^\top
\end{aligned}
\end{equation}
where $\tilde {\bm W}^p \in \mathbb{R}^{(c_x \cdot (h_x+2p_h) \cdot (w_x+2p_w)) \times (c_x \cdot h_x \cdot w_x)}$ is the same as that defined in Lemma \ref{lem:conv} and $ {\bm W}^{mp}_{i_{c_x},i_{h_y},i_{w_y}}\in \mathbb{R}^{ (h_p \cdot w_p)\times (c_x \cdot (h_x+2p_h) \cdot (w_x+2p_w))}$ is a zero matrix for $i_{c_x}\in[{c_x}], i_{h_y} \in[{h_y}], i_{w_y}\in[{w_y}]$, except for the entries at the $\mathfrak j_{row}$ rows and $\mathfrak j_{col}$ columns, which are equal to 1, i.e.,
\begin{align*}
&  ({\bm W}^{mp}_{i_{c_x},i_{h_y},i_{w_y}})_{[\mathfrak j_{row},\mathfrak j_{col}]} = 1
\end{align*}
where 
\begin{align*}
\mathfrak j_{row} = & (i_{h_p}\!-\!1) \!\cdot\! w_p \!+\!i_{w_p},\\
\mathfrak j_{col} = & (i_{c_x}\!-\!1)\!\cdot\!(h_x\!+\!2p_h) \!\cdot\! (w_x\!+\!2p_w) \!+\! (i_{w_y}\!-\!1) \!\cdot\! s_w \!+\! i_{w_p} \\ & \!+\! (i_{h_p}\!-\!1)\!\cdot\! (w_x\!+\!2p_w) \!+\! (i_{h_y}\!-\!1)\!\cdot\! (w_x\!+\!2p_w)\!\cdot\! s_h,
\end{align*}
for any $i_{h_p}\in[{h_p}], i_{w_p}\in[{w_p}]$.
\end{lemma}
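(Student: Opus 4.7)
The plan is to mirror the structure of the proof of Lemma \ref{lem:conv}, but replacing the linear accumulation over the window with an explicit selection followed by a maxout. I would proceed in three steps, for a fixed output index $(i_{c_x},i_{h_y},i_{w_y})$: (i) argue that $\tilde{\bm W}^p \tilde{\x}$ is the row-major flattening of the zero-padded tensor $\bar{\x}$, (ii) argue that $\bm W^{mp}_{i_{c_x},i_{h_y},i_{w_y}}\cdot (\tilde{\bm W}^p\tilde{\x})$ is an $h_p w_p$-dimensional vector whose entries are exactly the window values $\bar{x}_{[i_{c_x},j_h,j_w]}$ indexed by $(i_{h_p},i_{w_p})\in[h_p]\times[w_p]$, and (iii) apply the maxout and assemble the outputs in row-major order.

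For step (i), nothing new needs to be shown: $\tilde{\bm W}^p$ is the same padding matrix appearing in Lemma \ref{lem:conv}, so the identity $(\tilde{\bm W}^p\tilde{\x})_{[k]} = \bar{x}_{[i_{c_x},r,c]}$ with $k$ the row-major flat index of $(i_{c_x},r,c)$ in the padded tensor of shape $c_x\times(h_x+2p_h)\times(w_x+2p_w)$ is inherited. For step (ii), I would verify that the nonzero pattern of $\bm W^{mp}_{i_{c_x},i_{h_y},i_{w_y}}$ correctly realizes the selection map: the row index $\mathfrak{j}_{row}=(i_{h_p}-1)w_p+i_{w_p}$ is precisely the row-major flattening of $(i_{h_p},i_{w_p})$ inside the window; and a direct expansion of $\mathfrak{j}_{col}$ shows that it equals $(i_{c_x}-1)(h_x+2p_h)(w_x+2p_w) + (i_{h_p}+(i_{h_y}-1)s_h - 1)(w_x+2p_w) + i_{w_p}+(i_{w_y}-1)s_w$, which is the row-major flat index in the padded tensor of the position $(i_{c_x}, i_{h_p}+(i_{h_y}-1)s_h, i_{w_p}+(i_{w_y}-1)s_w)$. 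Since $j_h+p_h = i_{h_p}+(i_{h_y}-1)s_h$ and $j_w+p_w = i_{w_p}+(i_{w_y}-1)s_w$ by Definition \ref{def:max-pool}, this is exactly the padded location carrying the value $\bar{x}_{[i_{c_x},j_h,j_w]}$. Hence the $(i_{h_p},i_{w_p})$-th entry of $\bm W^{mp}_{i_{c_x},i_{h_y},i_{w_y}}\tilde{\bm W}^p\tilde{\x}$ equals $\bar{x}_{[i_{c_x},j_h,j_w]}$.

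Step (iii) is then immediate: applying $\max(\cdot)$ to the selected vector reproduces \eqref{equ:max-pool} by definition of maxout, giving $y_{[i_{c_x},i_{h_y},i_{w_y}]}$. Ranging over $(i_{c_x},i_{h_y},i_{w_y})$ in row-major order and stacking the scalars yields the right-hand side of \eqref{equ:max-pool2}, which by construction of $\tilde{\y}$ in \eqref{equ:flatten} matches the left-hand side entry by entry.

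The main obstacle is purely bookkeeping: carefully unpacking $\mathfrak{j}_{col}$ and confirming it is the row-major flat index into the padded tensor, so that the selection matrix lines up with the right pixels. There is no conceptual difficulty beyond what already appears in Lemma \ref{lem:conv}; the only genuine novelty, compared to the convolution case, is that the per-output aggregation is a nonlinear maxout rather than a linear combination, which is why the statement produces $c_x h_y w_y$ separate selection matrices $\bm W^{mp}_{i_{c_x},i_{h_y},i_{w_y}}$ rather than a single global weight matrix.
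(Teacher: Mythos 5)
Your proof is correct: the paper actually states Lemma \ref{lem:max-pool} without any proof, and your direct index-bookkeeping verification --- checking that $\tilde{\bm W}^p\tilde{\x}$ flattens the zero-padded tensor, that $\mathfrak{j}_{col}$ expands to the row-major padded index of $(i_{c_x},\, j_h+p_h,\, j_w+p_w)$ so that each $\bm W^{mp}_{i_{c_x},i_{h_y},i_{w_y}}$ selects exactly the window entries $\bar x_{[i_{c_x},j_h,j_w]}$, and that the maxout then reproduces \eqref{equ:max-pool} --- is precisely the argument the authors implicitly rely on. No gaps; the only detail you leave tacit is that the shifted indices $i_{h_p}+(i_{h_y}-1)s_h \le h_x+2p_h$ and $i_{w_p}+(i_{w_y}-1)s_w \le w_x+2p_w$ stay within the padded tensor, which follows from the formulas for $h_y$ and $w_y$ exactly as in Lemma \ref{lem:conv}.
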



Denote $\mathcal{L}_{mp\rightarrow fc}: \mathbb{R}^{c_x \cdot h_x \cdot w_x} \rightarrow  \mathbb{R}^{c_x \cdot h_y \cdot w_y} $ as the fully-connected layer transformed from a max pooling layer in Lemma \ref{lem:max-pool}. Given an input set $\mathcal{X}\subset \mathbb{R}^{c_x \cdot h_x \cdot w_x}$ in the form of an HZ, the output set of $\mathcal{L}_{mp\rightarrow fc}$ can be also represented as an HZ.
Indeed, using \eqref{equ:max-pool2}, the output set of $\mathcal{L}_{mp\rightarrow fc}$ can be constructed as 
\begin{align*}
    \mathcal{Y} = & [  \max(\bm W^{mp}_{1,1,1}\cdot \tilde{\bm W}^p\cdot \mathcal{X}) \;\; \max(\bm W^{mp}_{1,1,2}\cdot \tilde{\bm W}^p\cdot \mathcal{X}) \\ & \cdots \;\; \max(\bm W^{mp}_{c_x,h_y,w_y}\cdot \tilde{\bm W}^p\cdot\mathcal{X})]^\top\\ 
    = &  \max(\bm W^{mp}_{1,1,1}\cdot \tilde{\bm W}^p\cdot \mathcal{X}) \times \max(\bm W^{mp}_{1,1,2}\cdot \tilde{\bm W}^p\cdot \mathcal{X})\times \\ &  \cdots \times \max(\bm W^{mp}_{c_x,h_y,w_y}\cdot \tilde{\bm W}^p\cdot\mathcal{X}).
\end{align*}
Since HZs are closed under linear mapping \cite{bird2023hybrid}, we know $(\bm W^{mp}_{i_{c_x},i_{h_y},i_{w_y}}\cdot \tilde{\bm W}^p\cdot \mathcal{X})$ is an HZ for all $i_{c_x}\in[{c_x}], i_{h_y} \in[{h_y}], i_{w_y}\in[{w_y}]$. As each maxout activation function can be viewed as a piecewise linear function \cite{goodfellow2013maxout}, and a piecewise linear function can be exactly represented as an HZ \cite{zhang2024hybrid}, $\mathcal{X}^{max}_{i_{c_x},i_{h_y},i_{w_y}} \triangleq \max(\bm W^{mp}_{i_{c_x},i_{h_y},i_{w_y}}\cdot \tilde{\bm W}^p\cdot \mathcal{X})$ is also an HZ for all $i_{c_x}\in[{c_x}], i_{h_y} \in[{h_y}], i_{w_y}\in[{w_y}]$. Therefore, $\mathcal{Y}$ can be computed as the Cartesian product of $c_x \cdot h_y \cdot w_y$ HZs, which is also an HZ.

For a given CNN $\mt{\pi}_C$, the convolution and pooling layers can be transformed into corresponding fully-connected layers by using Lemma \ref{lem:conv}, Lemma \ref{lem:avg-pool} and Lemma \ref{lem:max-pool}. Other layers, such as batch normalization, can be converted in a similar manner. As a result, we can construct an  FFNN ${\mt{\pi}}_F$ that is equivalent to $\mt{\pi}_C$ in the sense that $\mt{\pi}_C$ and ${\mt{\pi}}_F$ have the same input-output mapping. In the next section, we will compute tight over-approximations of the reachable sets for  $\mt{\pi}_C$ based on ${\mt{\pi}}_F$.
    

\section{Reachability Analysis of CNN Using HZ}\label{sec:FFNN}
In this section, we propose a novel approach to efficiently reduce an FFNN while preserving its input-output relation with a  tunable bounded error. This is followed by the application of an HZ-based reachable set computation method for the reduced network.


\subsection{A Novel Reduction Approach For FFNNs }
In \cite{zhang2023reachability}, it was shown  that the input-output relationship of a ReLU-activated FFNN can be exactly expressed as an HZ. Consequently, the reachable set of an FFNN can be represented in closed form by an  HZ for a given HZ-represented input set.  However, the complexity of the HZ representation for the output increases proportionally with the total number of neurons in the FFNN. To improve the scalability of the HZ-based method, an effective method for compressing the FFNN - reducing the number of neurons while preserving its input-output mapping  - is essential. 



Consider an $\ell$-layer FFNN $\mt{\pi}_F$ with weight matrices $\{\bm W^{(k)}\}_{k\in[\ell]}$ and bias vectors $\{\bm v^{(k)}\}_{k\in[\ell]}$. Let an HZ $\mathcal{Z}$ be the input set of $\mt{\pi}_F$. Denote $\mathcal{J}^{(k)} \triangleq \li \bm \alpha^{(k)}, \bm \beta^{(k)} \ri \subset \mathbb{R}^{n_k}$ as the interval bound  of the ranges of neurons in the $k$-th layer of $\mt{\pi}_F$, i.e., 
\begin{align*}
    \mathcal{J}^{(k)} \supseteq \{ & \x^{(k)} \in \mathbb{R}^{n_k} | \x^{(i)} = \mathcal{L}_{fc}(\bm W^{(i)},\bm v^{(i)}, \bm x^{(i-1)}),  i\in [k], \\ &\quad\quad\quad\quad\quad \x^{(0)}\in \mathcal{Z} \}, \text{ for } k\in [\ell-1],\\
    \mathcal{J}^{(\ell)} \supseteq \{ & \mt{\pi}_F(\x) \in \mathbb{R}^{m} | \x\in \mathcal{Z} \}.
\end{align*}
The following lemma from \cite[Proposition 4]{ladner2023fully} shows that a reduced FFNN, $\hat{\mt{\pi}}_F$, with fewer neurons can be constructed by adjusting the weights and bias of the original FFNN, $\mt{\pi}_F$, such that $\hat{\mt{\pi}}_F$ over-approximates ${\mt{\pi}}_F$ in $\mathcal{Z}$.

\begin{lemma}\label{lem1} 
For the $k$-th layer of an FFNN $\mt{\pi}_F$, $k\in[\ell-1]$, given the interval bounds $\mathcal{J}^{(k)} \subset \mathbb{R}^{n_{k}}$ for the neurons in the $k$-th layer and the index set $\mathcal{N}^{(k)}\subseteq [n_{k}]$, 
a reduced network $\hat{\mt{\pi}}_F$ is constructed by adjusting the weights and bias of the $k$-th and $(k+1)$-th layers as follows:
\begin{equation}\label{equ:WBtilde}
\begin{aligned}
& \hat{\bm W}^{(k)}\!=\!\bm W^{(k)}_{[\overline{\mathcal{N}}^{(k)},:]},\; \hat{\bm b}^{(k)}=\bm b^{(k)}_{[\overline{\mathcal{N}}^{(k)},:]}, \\
& \hat{\bm W}^{(k+1)}\!=\!\bm W^{(k+1)}_{[:,\overline{\mathcal{N}}^{(k)}]},\; \hat{\bm b}^{(k+1)}=\bm b^{(k+1)}+\bm{\varepsilon}^{(k+1)},
\end{aligned}
\end{equation}
where $\overline{\mathcal{N}}^{(k)}\triangleq\left[n_{k}\right] \backslash \mathcal{N}^{(k)}$ denotes the index set of remaining neurons and $\hat{\bm b}^{(k+1)}$ includes the approximation error 
\begin{align}\label{equ:error}
    \bm{\varepsilon}^{(k+1)} \triangleq \bm W^{(k+1)}_{[:,{\mathcal{N}}^{(k)}]}\cdot \proj_{{\mathcal{N}}^{(k)}}(\mathcal{J}^{(k)}).
\end{align}
Then, 
$\mt{\pi}_F(\x) \in \hat{\mt{\pi}}_F(\x)$, $\forall \x\in \mathcal{Z}$.
    \end{lemma}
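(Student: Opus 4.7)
The plan is to track a single trajectory $\bm x,\bm x^{(1)},\ldots,\bm x^{(\ell-1)},\mt{\pi}_F(\bm x)$ of the original network for an arbitrary $\bm x\in\mathcal{Z}$ and show layer by layer that its value lies in the set-valued trajectory of the reduced network $\hat{\mt{\pi}}_F$. Because $\hat{\mt{\pi}}_F$ coincides with $\mt{\pi}_F$ on every layer except $k$ and $k+1$, the only nontrivial step is at the interface between those two layers; everywhere else the verification is essentially formal. The key preliminary observation is that $\hat{\mt{\pi}}_F(\bm x)$ must be interpreted as a set, since $\hat{\bm b}^{(k+1)}=\bm b^{(k+1)}+\bm\varepsilon^{(k+1)}$ is a set whenever $\bm\varepsilon^{(k+1)}$ is.

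First I would note that for $i\le k-1$ the weights and biases of the two networks agree, so $\hat{\bm x}^{(i)}=\bm x^{(i)}$. At layer $k$, by \eqref{equ:WBtilde} the reduced network simply drops the neurons indexed by $\mathcal{N}^{(k)}$, giving $\hat{\bm x}^{(k)}=\bm x^{(k)}_{[\overline{\mathcal{N}}^{(k)}]}$, the subvector of the original activations indexed by the retained neurons. The main step is layer $k+1$. I would split the original pre-activation by the two index sets,
\begin{equation*}
\bm W^{(k+1)}\bm x^{(k)}+\bm b^{(k+1)}=\bm W^{(k+1)}_{[:,\overline{\mathcal{N}}^{(k)}]}\bm x^{(k)}_{[\overline{\mathcal{N}}^{(k)}]}+\bm W^{(k+1)}_{[:,\mathcal{N}^{(k)}]}\bm x^{(k)}_{[\mathcal{N}^{(k)}]}+\bm b^{(k+1)}.
\end{equation*}
Since $\bm x^{(k)}\in\mathcal{J}^{(k)}$ by definition of the interval bound, one has $\bm x^{(k)}_{[\mathcal{N}^{(k)}]}\in\proj_{\mathcal{N}^{(k)}}(\mathcal{J}^{(k)})$, so the middle term lies in $\bm\varepsilon^{(k+1)}$ as defined in \eqref{equ:error}. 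Substituting the expressions in \eqref{equ:WBtilde} for $\hat{\bm W}^{(k+1)}$ and $\hat{\bm b}^{(k+1)}$, this shows that the original pre-activation is an element of the set $\hat{\bm W}^{(k+1)}\hat{\bm x}^{(k)}+\hat{\bm b}^{(k+1)}$.

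To finish, I would propagate the containment through the remaining layers via the elementary fact that for any map $f$ and any $y\in Y$ one has $f(y)\in f(Y)\triangleq\{f(z):z\in Y\}$; applied first to the componentwise ReLU at layer $k+1$ and then to each affine-plus-ReLU map at layers $k+2,\ldots,\ell$, this yields $\mt{\pi}_F(\bm x)\in\hat{\mt{\pi}}_F(\bm x)$. The main obstacle is largely a matter of careful bookkeeping: treating $\hat{\mt{\pi}}_F$ as a set-valued map, keeping the row/column index notation straight when splitting $\bm W^{(k+1)}\bm x^{(k)}$, and verifying that each subsequent composition preserves the ``member of the image'' relation so that the final output set $\hat{\mt{\pi}}_F(\bm x)$ contains $\mt{\pi}_F(\bm x)$.
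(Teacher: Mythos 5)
Your argument is correct. Note that the paper does not actually prove this lemma itself --- it imports it verbatim as \cite[Proposition 4]{ladner2023fully} --- so there is no in-paper proof to diverge from; your reconstruction (interpret $\hat{\mt{\pi}}_F$ as set-valued via the interval bias, observe that layers before $k$ and the retained rows at layer $k$ are unchanged, split $\bm W^{(k+1)}\bm x^{(k)}$ over $\overline{\mathcal{N}}^{(k)}$ and $\mathcal{N}^{(k)}$, use $\bm x^{(k)}_{[\mathcal{N}^{(k)}]}\in\proj_{\mathcal{N}^{(k)}}(\mathcal{J}^{(k)})$ to absorb the dropped contribution into $\bm{\varepsilon}^{(k+1)}$, then push membership forward through the remaining layers) is exactly the standard argument behind that cited result and is sound for the single-layer reduction as stated.
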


\begin{algorithm}[!hbt]
\SetNoFillComment
\caption{Determination of Reduced Neurons}\label{alg:identify}
\KwIn{Weight matrix $\bm W^{(k+1)} \in \mathbb{R}^{n_{k+1}\times n_k}$, interval bounds $\mathcal{J}^{(k)} = \li \bm \alpha^{(k)}, \bm \beta^{(k)} \ri  \subset \mathbb{R}^{n_{k}}$, error bound $\rho \in \mathbb{R}_{\geq 0}$}
\KwOut{Index set of reduced neurons in the $k$-th layer $\mathcal{N}^{(k)}$}
$\mathcal{N}^{(k)}$ $\leftarrow$ $\emptyset$;\\
$\bm d^{(k)}$  $\leftarrow$ $\bm\beta^{(k)} - \bm\alpha^{(k)}$ \tcp*{Interval diameter}
$\vec{\bm W}^{(k+1)}$ $\leftarrow$ $\sum_{i=1}^{n_{k+1}} |\bm W^{(k+1)}_{[i,:]}|$ \tcp*{Column-wise sums of the absolute values}
${\bm h}^{(k+1)}$ $\leftarrow$ $\vec{\bm W}^{(k+1)}\odot \bm d^{(k)}$ \tcp*{Hadamard product}
\For{$j \in \{1,2,\dots,n_k$\}}{
\If{$h^{(k+1)}_j \leq \rho$}
{$\mathcal{N}^{(k)}$ $\leftarrow$ $\mathcal{N}^{(k)}\cup \{j\}$\tcp*{Append index $j$}}
}
\Return{$\mathcal{N}^{(k)}$}
\end{algorithm}

The main idea of our FFNN reduction approach is straightforward: in the $k$-th layer, we eliminate neurons that contribute minimally to the $(k+1)$-th layer and introduce a compensatory term to account for the approximation error caused by this elimination in the $(k+1)$-th layer, such that the reduced FFNN over-approximates the original FFNN. Using Lemma \ref{lem1}, the  number of neurons of the $k$-th layer is reduced from $n_k$ to $n_k - |\mathcal{N}^{(k)}|$ and the resulting approximation error is $\bm{\varepsilon}^{(k+1)} = \li \underline{\bm{\varepsilon}}^{(k+1)}, \overline{\bm{\varepsilon}} ^{(k+1)}\ri\subset \mathbb{R}^{n_{k+1}}$ represented as an interval. To minimize  error propagation through subsequent layers of the reduced FFNN, we constrain the size of the error $\bm{\varepsilon}^{(k+1)}$ defined as $size(\bm{\varepsilon}^{(k+1)}) \triangleq \sum_{i=1}^{n_{k+1}} (\overline{\varepsilon}^{(k+1)}_i - \underline{\varepsilon}^{(k+1)}_i)$. Algorithm \ref{alg:identify} summarizes the procedure for identifying the maximum number of neurons that can be reduced while controlling the size of the approximation error for each layer. 

The following proposition shows that the size of the approximation error is minimized.

\begin{proposition}\label{prop:reduce}
Given the weight matrix $\bm W^{(k+1)} \in \mathbb{R}^{n_{k+1}\times n_k}$ for the $(k+1)$-th layer, the interval bounds $\mathcal{J}^{(k)} = \li \bm \alpha^{(k)}, \bm \beta^{(k)} \ri\subset \mathbb{R}^{n_{k}}$ for  neurons in the $k$-th layer, and a scalar $\rho \geq 0$, let $\mathcal{N}^{(k)}$ be the index set of reduced neurons determined by Algorithm \ref{alg:identify} and $\bm{\varepsilon}^{(k+1)}$ the induced approximation error as defined in \eqref{equ:error}. Then, with the same number of reduced neurons, the size of the error, $size(\bm{\varepsilon}^{(k+1)})$, is minimized and satisfies $size(\bm{\varepsilon}^{(k+1)}) \leq \rho\cdot|\mathcal{N}^{(k)}|$.
\end{proposition}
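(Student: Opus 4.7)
The plan is to reduce $size(\bm{\varepsilon}^{(k+1)})$ to a closed-form expression in the quantities $h^{(k+1)}_j$ computed by Algorithm \ref{alg:identify}, from which both the upper bound and the minimality claim follow by short arguments. First I would compute $\bm{\varepsilon}^{(k+1)}$ explicitly. The projection $\proj_{\mathcal{N}^{(k)}}(\mathcal{J}^{(k)})$ is the sub-interval of $\mathcal{J}^{(k)}$ obtained by restricting to the coordinates in $\mathcal{N}^{(k)}$, and $\bm{\varepsilon}^{(k+1)}$ is its image under the linear map with matrix $\bm W^{(k+1)}_{[:,\mathcal{N}^{(k)}]}$. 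By standard interval arithmetic, this image is itself a box whose $i$-th coordinate has diameter $\sum_{j\in\mathcal{N}^{(k)}}|\bm W^{(k+1)}_{[i,j]}|\cdot d^{(k)}_j$, where $d^{(k)}_j = \beta^{(k)}_j - \alpha^{(k)}_j$. Summing over $i$ and swapping the order of summation gives the key identity
\[
size(\bm{\varepsilon}^{(k+1)}) \;=\; \sum_{j\in\mathcal{N}^{(k)}} d^{(k)}_j \sum_{i=1}^{n_{k+1}}|\bm W^{(k+1)}_{[i,j]}| \;=\; \sum_{j\in\mathcal{N}^{(k)}} h^{(k+1)}_j,
\]
using the definitions of $\vec{\bm W}^{(k+1)}$ and $\bm h^{(k+1)}$ from Algorithm \ref{alg:identify}.

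The upper bound is then immediate: Algorithm \ref{alg:identify} inserts an index $j$ into $\mathcal{N}^{(k)}$ only when $h^{(k+1)}_j \leq \rho$, so each summand in the identity is at most $\rho$, giving $size(\bm{\varepsilon}^{(k+1)}) \leq \rho\cdot|\mathcal{N}^{(k)}|$.

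For the minimality claim, set $K \triangleq |\mathcal{N}^{(k)}|$. The identity reduces the problem to minimizing $\sum_{j\in\mathcal{N}'} h^{(k+1)}_j$ over all $\mathcal{N}'\subseteq[n_k]$ with $|\mathcal{N}'| = K$. This is a classical top-$K$ selection problem, whose minimizers are precisely the $K$-subsets consisting of indices with the smallest values of $h^{(k+1)}_j$. A short exchange argument verifies this: if $\mathcal{N}'$ contained some $j'\notin\mathcal{N}^{(k)}$ (so $h^{(k+1)}_{j'} > \rho$) while omitting some $j\in\mathcal{N}^{(k)}$ (so $h^{(k+1)}_{j} \leq \rho$), then replacing $j'$ by $j$ would not increase the sum. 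Since the output of Algorithm \ref{alg:identify} is exactly the set $\{j : h^{(k+1)}_j \leq \rho\}$, it realizes such a minimizer. The only step requiring genuine care is the interval-arithmetic derivation of the diameter formula; everything else reduces to elementary arithmetic and a standard exchange argument, so I do not foresee substantial obstacles.
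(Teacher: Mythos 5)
Your proposal is correct and follows essentially the same route as the paper's proof: the same interval-arithmetic reduction of $size(\bm{\varepsilon}^{(k+1)})$ to $\sum_{j\in\mathcal{N}^{(k)}} h^{(k+1)}_j$, the same per-term bound $h^{(k+1)}_j \leq \rho$ for the upper bound, and the same observation that every omitted index satisfies $h^{(k+1)}_{\bar j} > \rho$ for minimality. Your explicit exchange argument for the top-$K$ selection step merely spells out what the paper asserts in one line, so there is no substantive difference.
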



\begin{proof}
Using the  approximation error  defined in \eqref{equ:error} and the interval arithmetic, we have 
\begin{align*}
    size(\bm{\varepsilon}^{(k+1)}) = & size(\bm W^{(k+1)}_{[:,{\mathcal{N}}^{(k)}]}\cdot \proj_{{\mathcal{N}}^{(k)}}(\mathcal{J}^{(k)}))\\
     = & size(\sum_{j\in \mathcal{N}^{(k)}} \bm W^{(k+1)}_{[:,j]} \cdot \mathcal{J}^{(k)}_j)\\
     = & \sum_{i =1}^{n_{k+1}}\sum_{j\in \mathcal{N}^{(k)}} |\bm W^{(k+1)}_{[i,j]}| \cdot ( \beta^{(k)}_j - \alpha^{(k)}_j)\\
     = & \sum_{j\in \mathcal{N}^{(k)}} (\sum_{i =1}^{n_{k+1}} |\bm W^{(k+1)}_{[i,j]}| \cdot ( \beta^{(k)}_j - \alpha^{(k)}_j))\\
     = & \sum_{j\in \mathcal{N}^{(k)}} \vec{\bm W}^{(k+1)}_j \cdot d^{(k)}_j \\
     = & \sum_{j\in \mathcal{N}^{(k)}} \vec{\bm W}^{(k+1)}_j\odot \bm d^{(k)}_j \\
     = & \sum_{j\in \mathcal{N}^{(k)}} h_j^{(k+1)} \leq \rho \cdot |\mathcal{N}^{(k)}|
\end{align*}
where the last inequality is from $h_j^{(k+1)} \leq \rho,\forall j \in \mathcal{N}^{(k)}$. Since $\bar j\in \overline{\mathcal{N}}^{(k)}$ implies  $h_{\bar j}^{(k+1)} = \sum_{i =1}^{n_{k+1}} |\bm W^{(k+1)}_{[i,{\bar j}]}| \cdot ( \beta^{(k)}_{\bar j} - \alpha^{(k)}_{\bar j}) > \rho$, the value of $size(\bm{\varepsilon}^{(k+1)})$ is minimized with the same number of reduced neurons.
\end{proof}

\subsection{Over-approximated Reachable Sets for FFNNs and CNNs}

By applying Algorithm \ref{alg:identify} and Proposition \ref{prop:reduce} to each layer of FFNN $\mt{\pi}_F$, a reduced FFNN $\hat{\mt{\pi}}_F$ can be constructed with weight matrices $\{\hat{\bm W}\}_{k\in [\ell]}$ and bias $\{\hat{\bm v}\}_{k\in [\ell]}$. To compute the reachable set of $\hat{\mt{\pi}}_F$, we will use the following lemma from \cite[Proposition 2]{zhang2024reachability}, which states that the graph of the activation function $\bm{\sigma}$ can be over-approximated by an HZ.

\begin{lemma}\label{lem:sigma} 
Given an HZ $\mathcal{Z}\subset \mathbb{R}^{n_{k}}$, its interval hull $\mathcal{J}\triangleq \li\mt{\alpha},\mt{\beta}\ri = \itl(\mathcal{Z})$ and a tunable relaxation parameter $0\leq\gamma\leq 1$, the graph of the activation function $\mt\sigma: \mathbb{R}^{n_{k}}\rightarrow \mathbb{R}^{n_{k}}$ over $\mathcal{Z}$ can be over-approximated by the HZ: 
     \begin{equation}\label{equ:g_phi_tilde}
         \hat{\mathcal{G}}_{\mt\sigma}(\mathcal{Z}) = (\bm{P}\cdot \hat{\mathcal{G}}_{ReLU}(\mathcal{J}))\cap_{[\bm{I}\; \bm{0}]} \mathcal{Z} \supseteq \mathcal{G}_{\mt\sigma}(\mathcal{Z})
     \end{equation}
    where $\bm{P}=[\bm{e}_{2}\; \bm{e}_{4}\;\cdots\; \bm{e}_{2n_{k}} \; \bm{e}_{1}\; \bm{e}_{3}\;\cdots\; \bm{e}_{2n_{k}-1}]^T\in \mathbb{R}^{2n_{k}\times 2n_{k}}$ is a permutation matrix 
    , $\hat{\mathcal{G}}_{ReLU}(\mathcal{J}) = \hat{\mathcal{G}}_{ReLU}(\li\alpha_1,\beta_1 \ri) \times \cdots \times \hat{\mathcal{G}}_{ReLU}(\li\alpha_{n_k},\beta_{n_k}\ri)$ and 
    \begin{align*}
&\hat{\mathcal{G}}_{ReLU}(\li\alpha_i,\beta_i\ri) =\nonumber\\
&\left\{ \begin{array}{l}
\mathcal{H}_+\triangleq \lrangle{\mat{\frac{\beta_i-\alpha_i}{2} \\ \frac{\beta_i-\alpha_i}{2}},\emptyset,\mat{\frac{\beta_i+\alpha_i}{2} \\ \frac{\beta_i+\alpha_i}{2}},\emptyset,\emptyset,\emptyset}, \;\; \mbox{if} \; 0\leq  \alpha_i,\\ 
\mathcal{H}_-\triangleq \lrangle{\mat{\frac{\beta_i-\alpha_i}{2} \\ 0}, \emptyset,\mat{\frac{\beta_i+\alpha_i}{2} \\ 0}, \emptyset,\emptyset,\emptyset}, \;\; \mbox{if}\; \beta_i \leq 0,\\
\mathcal{H}_\pm\triangleq \mathcal{H}_+ \cup \mathcal{H}_-, \;\; \mbox{if}\; \alpha_i \!<\!0\!<\!\beta_i \wedge \frac{|\alpha_i|}{\beta_i} \!>\! \gamma \wedge \frac{\beta_i}{|\alpha_i|} \!>\! \gamma,\\
\mathcal{C}_\triangle \triangleq {ConvexHull}(\mathcal{H}_\pm), \;\; \mbox{otherwise},
\end{array}\right.
\end{align*} 
for $i\in[n_k]$. 
Moreover, when $\gamma = 0$, $\hat{\mathcal{G}}_{\mt\sigma}(\mathcal{Z}) = \mathcal{G}_{\mt\sigma}(\mathcal{Z})$. 
\end{lemma}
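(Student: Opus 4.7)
The plan is to exploit the componentwise structure of $\mt\sigma$, reduce to the scalar ReLU on each coordinate interval $[\alpha_i,\beta_i]$, and then re-assemble using closure properties of HZs. Concretely, I would first observe that because $\mt\sigma$ acts coordinatewise, the exact graph of $\mt\sigma$ over the interval hull $\mathcal{J}$ factorizes (up to coordinate interleaving) as $\prod_{i=1}^{n_k}\{(t,\mathrm{ReLU}(t)) : t \in [\alpha_i,\beta_i]\}$. Since HZs are closed under Cartesian product, linear maps (in particular the permutation $\bm P$), and generalized intersection \cite{bird2023hybrid}, the problem reduces to (i) showing that each $\hat{\mathcal{G}}_{ReLU}(\li\alpha_i,\beta_i\ri)$ is a valid HZ containing the scalar graph, and (ii) verifying that the subsequent stitching, permutation, and intersection really do produce a valid over-approximation of $\mathcal{G}_{\mt\sigma}(\mathcal{Z})$.

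For step (i) I would dispatch on the four branches directly. In the cases $0\le\alpha_i$ and $\beta_i\le 0$, unpacking the HCG-representations of $\mathcal{H}_+$ and $\mathcal{H}_-$ produces the parametric lines $(t,t)$ and $(t,0)$ for $t\in[\alpha_i,\beta_i]$, which coincide with the scalar ReLU graphs exactly. In the sign-crossing case with the $\gamma$-threshold satisfied, I would interpret $\mathcal{H}_+$ and $\mathcal{H}_-$ as the positive and negative branch graphs on $[0,\beta_i]$ and $[\alpha_i,0]$, so their union is the exact V-shape; closure of HZs under union lets me conclude that $\mathcal{H}_\pm$ is itself an HZ. The convex-hull branch is justified geometrically: the V-shape lies inside the triangle with vertices $(\alpha_i,0)$, $(0,0)$, $(\beta_i,\beta_i)$, and closure of HZs under convex hull makes $\mathcal{C}_\triangle$ an HZ that over-approximates the true scalar graph.

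For step (ii) I would note that the Cartesian product $\hat{\mathcal{G}}_{ReLU}(\mathcal{J})$ lives in $\mathbb{R}^{2n_k}$ with the interleaved ordering $(x_1,y_1,\ldots,x_{n_k},y_{n_k})$; applying $\bm P$ permutes coordinates to block order so that the subsequent generalized intersection under $[\bm I\;\bm 0]$ constrains the input block to lie in $\mathcal Z$. Because $\mathcal Z\subseteq\mathcal J$, every pair $(x,\mt\sigma(x))$ with $x\in\mathcal Z$ belongs to the product set and survives the intersection, establishing $\hat{\mathcal{G}}_{\mt\sigma}(\mathcal{Z})\supseteq\mathcal{G}_{\mt\sigma}(\mathcal{Z})$. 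The $\gamma=0$ tightness claim follows because the inequalities $|\alpha_i|/\beta_i>0$ and $\beta_i/|\alpha_i|>0$ are automatic whenever $\alpha_i<0<\beta_i$, so the convex-hull branch never fires, every scalar graph is exact, the Cartesian product is exact, and the final intersection with $\mathcal Z$ restores equality.

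The main obstacle I anticipate is the careful coordinate bookkeeping around $\bm P$ and $[\bm I\;\bm 0]$: the scalar HZs place input and output in a specific order, and one must track that order through the product and the permutation to confirm that $[\bm I\;\bm 0]$ really constrains inputs rather than outputs. A secondary subtlety is checking that $\mathcal{H}_\pm$ and $\mathcal{C}_\triangle$ admit explicit HCG-representations with the right numbers of binary generators and equality constraints, but this reduces to invoking the general union and convex-hull constructions for HZs rather than rederiving fresh formulas.
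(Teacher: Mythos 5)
First, a point of reference: the paper does not actually prove this lemma --- it is imported verbatim from Proposition~2 of \cite{zhang2024reachability} --- so there is no in-paper proof to compare against. Your outline (reduce to the scalar ReLU on each $\li\alpha_i,\beta_i\ri$, reassemble by Cartesian product, permutation, and generalized intersection, invoking closure of HZs under these operations) is the natural reconstruction and matches the intent of the construction.

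That said, two steps of your argument assert things that the formulas as stated do not deliver, and both matter. (1) In the sign-crossing branch you claim that $\mathcal{H}_+\cup\mathcal{H}_-$ ``is the exact V-shape,'' reading $\mathcal{H}_+$ as the graph on $[0,\beta_i]$ and $\mathcal{H}_-$ as the graph on $[\alpha_i,0]$. But as defined in the first two branches, both sets are segments over the \emph{full} interval $\li\alpha_i,\beta_i\ri$ (their centers and generators involve both endpoints), so $\mathcal{H}_\pm$ is two segments crossing at the origin: it contains the ReLU graph but also the spurious pieces $\{(t,t):t\in[\alpha_i,0)\}$ and $\{(t,0):t\in(0,\beta_i]\}$. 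This is harmless for the containment $\hat{\mathcal{G}}_{\mt\sigma}(\mathcal{Z})\supseteq\mathcal{G}_{\mt\sigma}(\mathcal{Z})$, but it invalidates your proof of the $\gamma=0$ exactness claim, since the intersection $\cap_{[\bm I\;\bm 0]}\mathcal{Z}$ only constrains inputs and cannot remove the spurious output branch; exactness requires re-instantiating $\mathcal{H}_+$ on $[0,\beta_i]$ and $\mathcal{H}_-$ on $[\alpha_i,0]$, which you must either derive or flag as a correction to the stated formulas. (2) You identify the $\bm P$ versus $[\bm I\;\bm 0]$ bookkeeping as the main obstacle and then assert it resolves favorably, but carrying it out with the stated conventions shows otherwise: the scalar graphs are ordered $(x_i,y_i)$ (the second coordinate of $\mathcal{H}_-$ is identically $0$, i.e., the output), so the product is interleaved as $(x_1,y_1,\dots,x_{n_k},y_{n_k})$, and $\bm P$ as written maps this to $(y_1,\dots,y_{n_k},x_1,\dots,x_{n_k})$, whence $[\bm I\;\bm 0]$ selects the \emph{output} block rather than the input block. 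The central containment step therefore rests on a coordinate claim that is unverified and, read literally, false; completing the proof requires fixing the ordering convention (e.g., dropping the transpose on $\bm P$ or swapping the roles of $[\bm I\;\bm 0]$ and $[\bm 0\;\bm I]$) consistently with the cited source.
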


\begin{algorithm}[!ht]
\SetNoFillComment
\caption{Over-approximated Reachability Analysis of FFNN Using HZ}\label{alg:reachFFNN}
\KwIn{HZ input set $\mathcal{Z}$, original FFNN $\mt{\pi}_F$ with weight matrices $\{\bm W^{(k)}\}_{k=1}^{\ell}$ and bias vectors $\{\bm v^{(k)}\}_{k=1}^{\ell}$, error bound $\rho \geq 0$, HZ relaxation parameter $0\leq\gamma\leq 1$}
\KwOut{Over-approximated reachable set $\hat{\mathcal{R}}_{\mt{\pi}_F}$ as an HZ}
$\mathcal{X}^{(0)}$ $\leftarrow$ $\mathcal{Z} = \hz{z}$;\\
$\hat{\bm W}^{(1)}$ $\leftarrow$ $ \bm W^{(1)}$; $\hat{\bm v}^{(1)}$ $\leftarrow$ $ \bm v^{(1)}$;\\
\For{$k \in \{1,2,\dots,\ell-1$\}}{
$\mathcal{Z}^{(k)} \leftarrow \hat{\bm{W}}^{(k)}\mathcal{X}^{(k-1)}\!+\!\hat{\bm{v}}^{(k)}$; \\
$ \mathcal{J}^{(k)}_z = \li \mt{\alpha}^{(k)}, \mt{\beta}^{(k)}\ri \leftarrow CROWN(\mathcal{Z}^{(k)})$;\\
$ \mathcal{J}^{(k)}_x\!\! \leftarrow \!\mt{\sigma}(\mathcal{J}^{(k)}_z)$;\\
$\mathcal{N}^{(k)} \leftarrow  $ Algorithm \ref{alg:identify} with $\bm{W}^{(k+1)}$, $\mathcal{J}^{(k)}_x$ and $\rho$;\\
$\hat{\bm W}^{(k)},\hat{\bm v}^{(k)},\hat{\bm W}^{(k+1)},\hat{\bm v}^{(k+1)} \leftarrow $ \eqref{equ:WBtilde} in Lemma \ref{lem1};\\
$\hat{\mathcal{Z}}^{(k)} \leftarrow \proj_{\overline{\mathcal{N}}^{(k)}} (\mathcal{Z}^{(k)})$;\\
$\hat{\mathcal{J}}^{(k)}_z \leftarrow \proj_{\overline{\mathcal{N}}^{(k)}} (\mathcal{J}^{(k)}_z)$;\\
$\hat{\mathcal{G}}^{(k)}$ $\leftarrow$ $(\bm{P}\cdot \hat{\mathcal{G}}_{ReLU}(\hat{\mathcal{J}}^{(k)}_z))\cap_{[\bm{I}\; \bm{0}]} \hat{\mathcal{Z}}^{(k)}$;\\
$\mathcal{X}^{(k)}$ $\leftarrow$ $[\bm{0}\;\bm{I}] \cdot \hat{\mathcal{G}}^{(k)}$;\\
}
$\hat{\mathcal{R}}_{\mt{\pi}_F}$ $\leftarrow$ $\hat{\bm{W}}^{(\ell)}\mathcal{X}^{(\ell-1)}+\hat{\bm{v}}^{(\ell)}$;\\
\Return{$\hat{\mathcal{R}}_{\mt{\pi}_F}$}
\end{algorithm}

To construct an HZ over-approximation of the reachable set for $\mt\pi_F$, we propagate the input set through the reduced FNN $\hat{\mt\pi}_F$ layer by layer using Lemma \ref{lem:sigma} and linear map operations on HZs. The detailed procedure is  summarized in Algorithm \ref{alg:reachFFNN}, which is adapted from \cite[Algorithm 2]{zhang2024reachability}. Notably, in Line 5, we use CROWN \cite{zhang2018efficient,wang2021betaetal} with GPU-accelerated computation to efficiently approximate the interval bounds for each layer in the reduced FFNN. Compared with our previous method in \cite{zhang2024reachability}, which computes an exact interval hull of an HZ by solving a set of MILPs for each layer,  using CROWN significantly enhances the scalability of the proposed method without substantially impacting the accuracy of the HZ-represented reachable set.

The following theorem shows that the reachable set over-approximation in Algorithm \ref{alg:reachFFNN} is sound.

\begin{theorem}\label{thm:alg-reduce}
Given an $\ell$-layer ReLU-activated FNN $\mt{\pi}_F: \mathbb{R}^n\rightarrow \mathbb{R}^m$ and an HZ $\mathcal{Z}\subset\mathbb{R}^n$, the output of Algorithm \ref{alg:reachFFNN}, $\hat{\mathcal{R}}_{\mt{\pi}_F}$, is an HZ that over-approximates the exact reachable set of $\mt{\pi}_F$ over $\mathcal{Z}$, i.e., $\hat{\mathcal{R}}_{\mt{\pi}_F} \supseteq {\mathcal{R}}_{\mt{\pi}_F}(\mathcal{Z})$. Furthermore, $\hat{\mathcal{R}}_{\mt{\pi}_F} ={\mathcal{R}}_{\mt{\pi}_F}(\mathcal{Z})$ when $\rho = 0$ and $\gamma = 0$.
\end{theorem}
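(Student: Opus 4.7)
The plan is to prove both parts of the theorem by induction on the layer index $k \in \{0, 1, \dots, \ell\}$, tracking an invariant of the form $\mathcal{X}^{(k)} \supseteq \{\x^{(k)} \mid \x^{(0)} \in \mathcal{Z}\}$ where $\x^{(k)}$ denotes the $k$-th layer activations of the original (non-reduced) FFNN $\mt{\pi}_F$. The base case $k=0$ is immediate since $\mathcal{X}^{(0)} = \mathcal{Z}$ by Line 1. For the inductive step, the key observation is that every set-valued operation performed on lines 4--12 is either exact or an over-approximation whose correctness has already been established in an earlier lemma: linear mapping (Line 4) is exact and preserves the HZ class by closure under affine maps; the interval bound $\mathcal{J}^{(k)}_z$ computed by CROWN on Line 5 is sound by construction; the neuron-pruning step (Lines 7--8) is sound by Lemma \ref{lem1}, which guarantees $\mt{\pi}_F(\x) \in \hat{\mt{\pi}}_F(\x)$ for all $\x \in \mathcal{Z}$; and the activation-function abstraction (Line 11) is sound by Lemma \ref{lem:sigma}. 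The projection and generalized intersection with $[\bm{I}\;\bm{0}]$ on Lines 9--12 extract the post-activation component, mirroring the construction in Lemma \ref{lem:sigma}. Concatenating these operations preserves over-approximation, and the final affine mapping on Line 13 therefore yields $\hat{\mathcal{R}}_{\mt{\pi}_F} \supseteq \mathcal{R}_{\mt{\pi}_F}(\mathcal{Z})$. Since HZs are closed under affine maps, generalized intersections, unions, convex hulls, and Cartesian products \cite{bird2023hybrid}, the resulting set $\hat{\mathcal{R}}_{\mt{\pi}_F}$ is itself an HZ.

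For the equality claim, I would separately verify that when $\rho = 0$ and $\gamma = 0$, every over-approximating step in the loop collapses to exact equality. When $\gamma = 0$, Lemma \ref{lem:sigma} explicitly gives $\hat{\mathcal{G}}_{\mt\sigma}(\mathcal{Z}) = \mathcal{G}_{\mt\sigma}(\mathcal{Z})$, so the activation-graph construction on Line 11 is exact. When $\rho = 0$, Algorithm \ref{alg:identify} only admits index $j$ into $\mathcal{N}^{(k)}$ if $h^{(k+1)}_j \leq 0$; since $h^{(k+1)}_j = \vec{\bm W}^{(k+1)}_j \odot d^{(k)}_j \geq 0$ by construction, this forces $h^{(k+1)}_j = 0$, so by Proposition \ref{prop:reduce} the induced approximation error satisfies $size(\bm{\varepsilon}^{(k+1)}) = 0$, making the reduction at each layer lossless (the reduced neurons contribute zero). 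Combining the two cases, every step propagates the set exactly, yielding $\hat{\mathcal{R}}_{\mt{\pi}_F} = \mathcal{R}_{\mt{\pi}_F}(\mathcal{Z})$.

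The main obstacle I anticipate is making the bookkeeping between the original network $\mt{\pi}_F$ and the on-the-fly reduced network $\hat{\mt{\pi}}_F$ precise, since Algorithm \ref{alg:reachFFNN} reduces neurons layer-by-layer using interval bounds that depend on the current (partially reduced) weights $\hat{\bm W}^{(k)}$. Care is needed to ensure that the interval bounds $\mathcal{J}^{(k)}_z$ computed on Line 5 via CROWN applied to $\mathcal{Z}^{(k)}$ remain valid enclosures of the pre-activation values of the original network along the $\overline{\mathcal{N}}^{(k)}$ coordinates. I would formalize this by strengthening the induction hypothesis to: (i) $\mathcal{X}^{(k)}$ over-approximates the set of true post-activation values of $\mt{\pi}_F$ at layer $k$ projected onto the retained coordinates, and (ii) the compensatory bias correction $\bm{\varepsilon}^{(k+1)}$ in \eqref{equ:error} absorbs the contribution of the eliminated neurons. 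Lemma \ref{lem1} provides exactly this guarantee for a single reduction step, so the induction reduces to composing these guarantees across layers, and the soundness of the HZ representation follows from the closure properties cited above.
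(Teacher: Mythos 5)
Your proposal is correct and follows essentially the same route as the paper's proof: layer-by-layer propagation where the reduction step is sound by Lemma \ref{lem1}, the activation-graph abstraction is sound by Lemma \ref{lem:sigma}, the result is an HZ by closure under the operations used, and exactness under $\rho=\gamma=0$ follows because Algorithm \ref{alg:identify} then only removes neurons with $h^{(k+1)}_j=0$ (zero-size, hence lossless, compensation) and the graph construction becomes exact. Your explicit induction and the strengthened invariant are just a more careful formalization of the same argument.
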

\begin{proof}
A reduced FFNN $\hat{\mt{\pi}}_F$ is constructed using  \eqref{equ:WBtilde} in Line 8 of Algorithm \ref{alg:reachFFNN}. In Line 9-12, the over-approximated input set $\hat{\mathcal{Z}}^{(k)}$, graph $\hat{\mathcal{G}}^{(k)}$ and  output set $\mathcal{X}^{(k)}$ of the $k$-th layer of the reduced FFNN is computed iteratively for $k\in[\ell-1]$. Thus, the over-approximation properties are preserved through the propagation of each hidden layer. Only a linear map is applied to the last layer in Line 13. As $\hat{\mt\pi}_F$ over-approximates $\mt{\pi}_F$ over  $\mathcal{Z}$, $\hat{\mathcal{R}}_{\mt{\pi}_F}\supseteq \mathcal{R}_{\hat{\mt\pi}_F}(\mathcal{Z})\supseteq \mathcal{R}_{{\mt\pi}_F}(\mathcal{Z})$. Since $\mathcal{Z}$ is an HZ and HZs are closed under all the set operations involved in Algorithm \ref{alg:reachFFNN}, $\hat{\mathcal{R}}_{\mt{\pi}_F}$ is also an HZ by construction. 
When $\rho = \gamma = 0$, $\hat{\mt\pi}_F$ preserves the same input-output relationship of $\mt\pi_F$ as shown in Lemma \ref{lem1}, and the constructed graph set in Line 11 is exact for each layer by Lemma \ref{lem:sigma}. Thus, $\hat{\mathcal{R}}_{\mt{\pi}_F} ={\mathcal{R}}_{\mt{\pi}_F}(\mathcal{Z})$. 
\end{proof}


Finally, for a given CNN $\mt{\pi}_C$, its over-approximated reachable set can be computed by directly applying Theorem \ref{thm:alg-reduce} to the FFNN ${\mt{\pi}}_F$ that is constructed in Section \ref{subsec:CNN} and equivalent to $\mt{\pi}_C$, as shown in the following result.


\begin{corollary}\label{cor:CNN}
    Given a CNN $\mt{\pi}_C:\mathbb{R}^{c_I \times h_I\times w_I} \rightarrow \mathbb{R}^m$ with an input set $\mathcal{I} \subset \mathbb{R}^{c_I \times h_I\times w_I}$, let ${\mt{\pi}}_F: \mathbb{R}^{c_I \cdot h_I \cdot w_I} \rightarrow \mathbb{R}^m$ be the FFNN transformed from $\mt{\pi}_C$ using Lemma \ref{lem:conv} - Lemma \ref{lem:max-pool}, and denote $\hat{\mathcal{R}}_{{\mt{\pi}}_F}$ as the result by applying Algorithm \ref{alg:reachFFNN} to ${\mt{\pi}}_F$, then $\hat{\mathcal{R}}_{{\mt{\pi}}_F} \supseteq \mathcal{R}_{\mt{\pi}_C}(\mathcal{I})$. Moreover, when $\rho = \gamma = 0$ in Algorithm \ref{alg:reachFFNN}, $\hat{\mathcal{R}}_{{\mt{\pi}}_F} = \mathcal{R}_{\mt{\pi}_C}(\mathcal{I})$.
\end{corollary}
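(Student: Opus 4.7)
The plan is to reduce the corollary directly to Theorem \ref{thm:alg-reduce} by exhibiting ${\mt{\pi}}_F$ as an input--output equivalent reformulation of ${\mt{\pi}}_C$, so that reachable sets computed for ${\mt{\pi}}_F$ are literally the same as those computed for ${\mt{\pi}}_C$ up to the flattening bijection. Once that equivalence is in hand, both the containment claim and the tightness claim follow by invoking Theorem \ref{thm:alg-reduce} layer by layer on ${\mt{\pi}}_F$.

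First I would make the equivalence precise. Let $\phi : \mathbb{R}^{c_I\times h_I\times w_I}\to \mathbb{R}^{c_I\cdot h_I\cdot w_I}$ denote the row-major flattening in \eqref{equ:flatten}, and let $\tilde{\mathcal{I}}\triangleq\{\phi(I)\mid I\in\mathcal{I}\}$. For each of the four layer types appearing in \eqref{equ:CNN}, Lemmas \ref{lem:conv}, \ref{lem:avg-pool}, \ref{lem:max-pool}, together with the trivial embedding of a fully-connected layer, produce a fully-connected layer whose action on the flattened input matches the original layer's action on the tensor input. Composing these layer-wise identities along the $\ell$ layers of ${\mt{\pi}}_C$ yields $\mt{\pi}_C(I) = {\mt{\pi}}_F(\phi(I))$ for every admissible input $I$. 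Taking the image over $\mathcal{I}$ gives $\mathcal{R}_{{\mt{\pi}}_F}(\tilde{\mathcal{I}}) = \mathcal{R}_{{\mt{\pi}}_C}(\mathcal{I})$, where the codomain coincidence is automatic since the final layer of ${\mt{\pi}}_C$ is already fully-connected and its output is a vector in $\mathbb{R}^m$.

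Next I would apply Theorem \ref{thm:alg-reduce} to $({\mt{\pi}}_F,\tilde{\mathcal{I}})$. The theorem states that Algorithm \ref{alg:reachFFNN} returns an HZ $\hat{\mathcal{R}}_{{\mt{\pi}}_F}$ with $\hat{\mathcal{R}}_{{\mt{\pi}}_F}\supseteq \mathcal{R}_{{\mt{\pi}}_F}(\tilde{\mathcal{I}})$, so chaining with the equivalence from the previous step gives $\hat{\mathcal{R}}_{{\mt{\pi}}_F}\supseteq \mathcal{R}_{{\mt{\pi}}_C}(\mathcal{I})$, as desired. For the second claim, setting $\rho=\gamma=0$ turns both approximation sources off in Algorithm \ref{alg:reachFFNN}: Lemma \ref{lem1} forces $\hat{\mt{\pi}}_F={\mt{\pi}}_F$ (no neurons are eliminated because $\rho=0$), and Lemma \ref{lem:sigma} gives an exact HZ graph of the activation at each hidden layer. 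Hence Theorem \ref{thm:alg-reduce} yields $\hat{\mathcal{R}}_{{\mt{\pi}}_F}=\mathcal{R}_{{\mt{\pi}}_F}(\tilde{\mathcal{I}})=\mathcal{R}_{{\mt{\pi}}_C}(\mathcal{I})$.

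The main delicate point I expect is that the FFNN produced by the transformation is not purely ReLU-activated: the max pooling layer in Lemma \ref{lem:max-pool} contributes maxout activations, whereas Theorem \ref{thm:alg-reduce} was phrased for ReLU FFNNs. I would handle this by invoking the remark following \eqref{equ:max-pool2}, which shows that for any HZ input, the output of a maxout layer is again an HZ (obtained as a Cartesian product of HZ-represented piecewise-linear images) and is exact. Consequently the corresponding step in Algorithm \ref{alg:reachFFNN} can be carried out without loss of soundness, and with no loss of exactness when $\gamma=0$, so the argument of Theorem \ref{thm:alg-reduce} goes through verbatim for ${\mt{\pi}}_F$. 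Apart from this observation, the proof is essentially a one-line reduction.
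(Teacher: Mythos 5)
Your proposal is correct and follows essentially the same route as the paper, which treats the corollary as an immediate consequence of Theorem \ref{thm:alg-reduce} applied to the input--output equivalent FFNN obtained from Lemmas \ref{lem:conv}--\ref{lem:max-pool}. Your explicit handling of the maxout activations arising from max pooling layers (via the exact HZ representation discussed after \eqref{equ:max-pool2}) addresses a subtlety the paper leaves implicit, and is a welcome clarification rather than a deviation.
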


\section{Simulation Results}\label{sec:example}
Two simulation examples are provided to demonstrate the effectiveness of the proposed method. The implementation was carried out in Python and executed on a desktop with an Intel Core i5-11400F CPU and 32GB of RAM. 

\subsection{Robustness Verification of MNIST CNN}
In the first example, we use two CNNs from the ERAN benchmark in \cite{bak2021second}, 
trained on the MNIST dataset, which consists of 60,000 images of handwritten digits with a resolution of $1 \times 28 \times 28$ pixels. Each pixel has a value from 1 to 255. Based on the architectures of the CNNs, they are referred to as the small and large MNIST CNNs: $\mt{\pi}_{C,\text{small}}, \mt{\pi}_{C,\text{large}}: \mathbb{R}^{1 \times 28 \times 28} \rightarrow \mathbb{R}^{10}$. 
The CNNs classify the digit images into ten classes: $0,1,\dots,9$, and the classified output corresponds to the index of the maximum value among the 10 output dimensions.

\begin{figure}[!t]
    \centering
    \begin{subfigure}[b]{0.43\linewidth}
        \centering
        \includegraphics[width=0.95\linewidth]{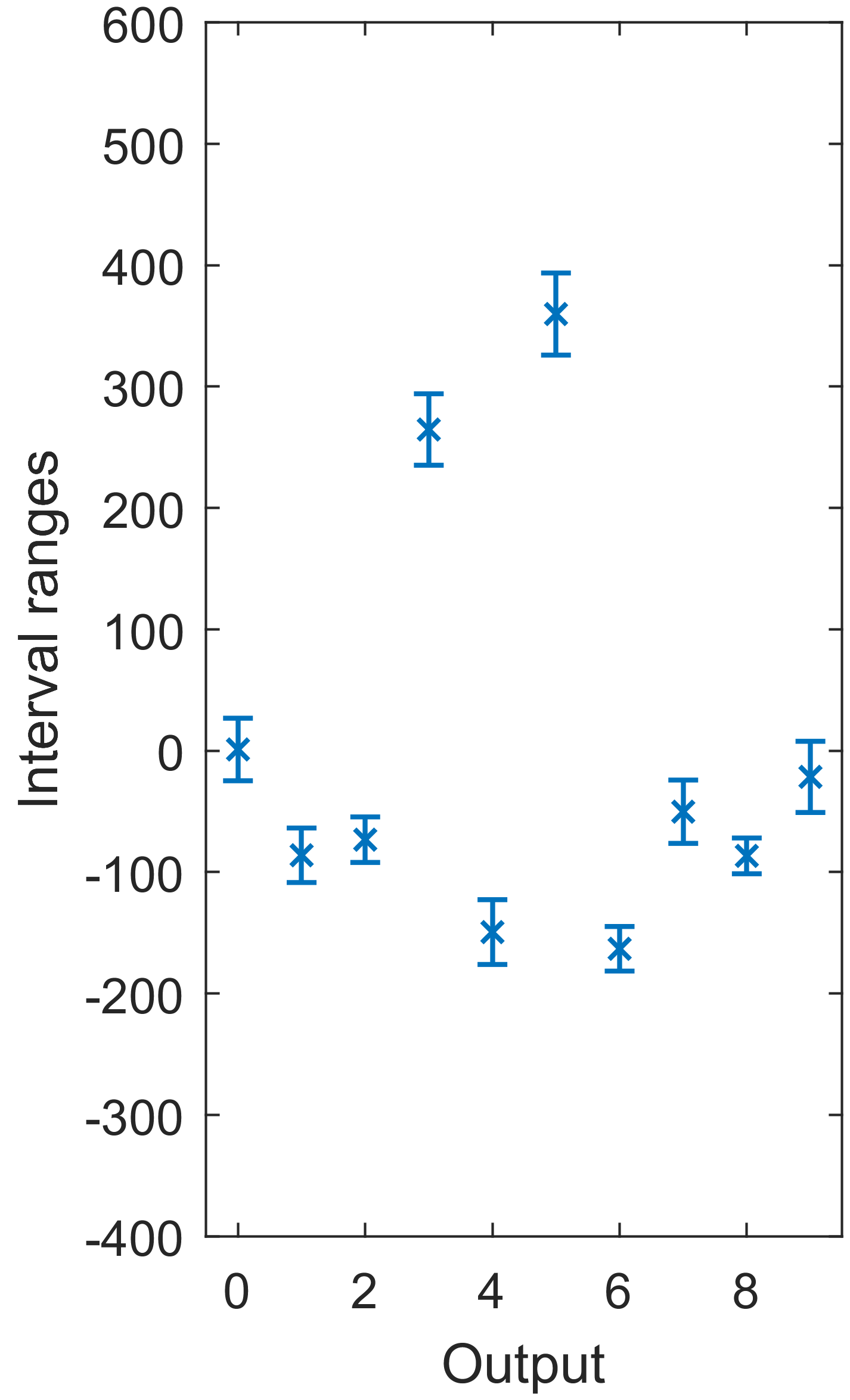}
        \caption{This work}
    \end{subfigure}
    \hspace{2em}
    \begin{subfigure}[b]{0.43\linewidth}
        \centering
        \includegraphics[width=0.95\linewidth]{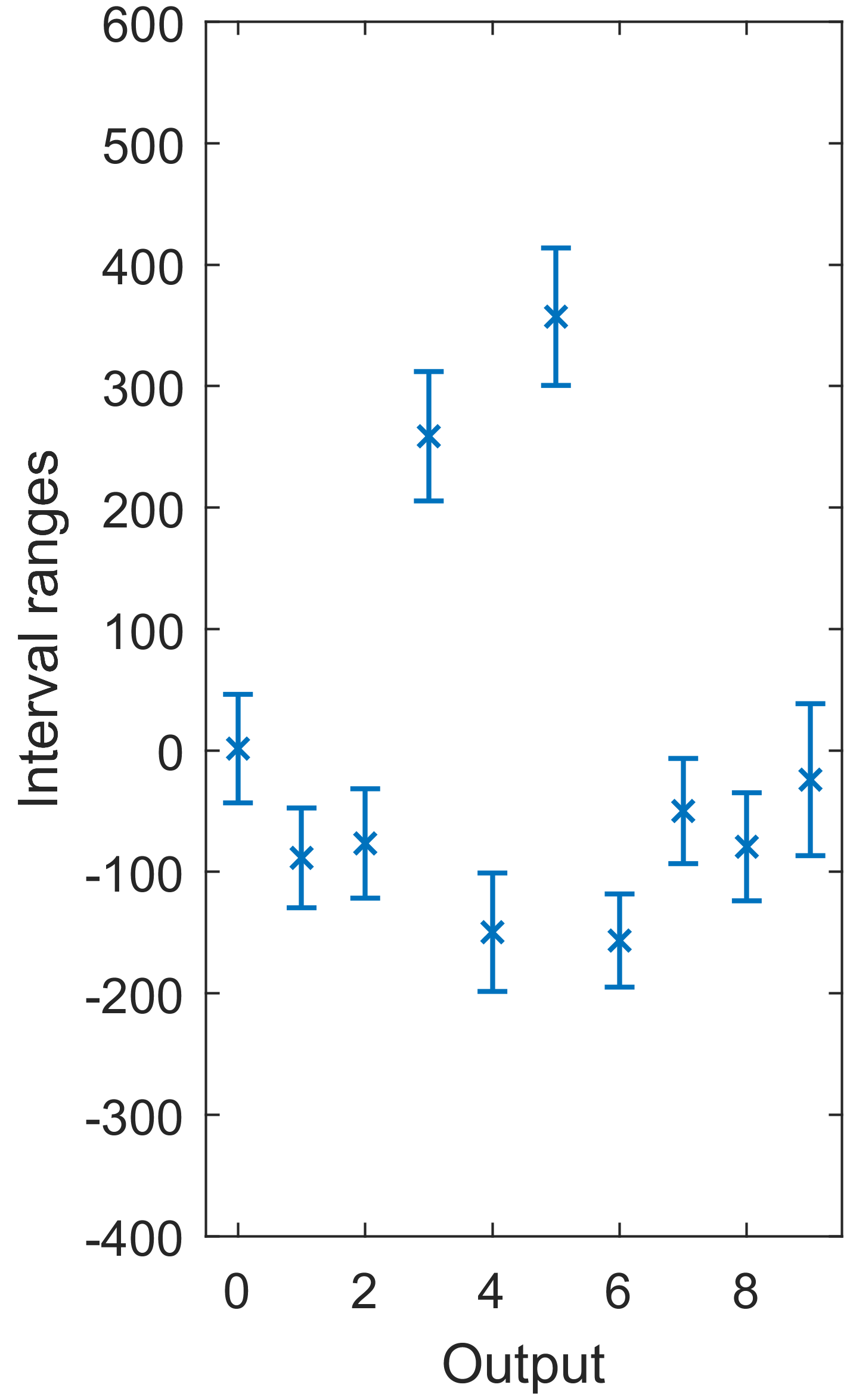}
        \caption{$\alpha$-CROWN \cite{Xu2021fast}}
    \end{subfigure}
    \caption{An example of output ranges of the small MNIST CNN with $d = 200$ and $\delta = 0.05$.}
    \label{fig:ranges}
\end{figure}

\begin{table}[!t]
\centering
\begin{tabular}{|c|cc|cc|}
\hline
\multirow{2}{*}{} & \multicolumn{2}{c|}{Robustness results (\%)} & \multicolumn{2}{c|}{Average runtimes (s)}               \\ \cline{2-5} 
                             & \multicolumn{1}{c|}{Ours}& $\alpha$-CROWN  & \multicolumn{1}{c|}{Ours}   & $\alpha$-CROWN      \\ \hline
\makecell{$d \!=\! 245$ \\ $\delta \!=\! 0.01$}  & \multicolumn{1}{c|}{86.2}  & 86.2 & \multicolumn{1}{c|}{0.514} & 0.259 \\ \hline
\makecell{$d \!=\! 230$ \\ $\delta \!=\! 0.015$} & \multicolumn{1}{c|}{80.9}     & 80.9  & \multicolumn{1}{c|}{0.706} & 0.343 \\ \hline
\makecell{$d \!=\! 200$ \\ $\delta \!=\! 0.05$} & \multicolumn{1}{c|}{69.2}      & 67.9  & \multicolumn{1}{c|}{1.751}  & 0.592 \\ \hline
\end{tabular}
\caption{Robustness verification results for the small MNIST CNN on 1000 randomly perturbed image input.}\label{tab:1}
\end{table}

\begin{table}[!t]
\centering
\begin{tabular}{|c|cc|cc|}
\hline
\multirow{2}{*}{} & \multicolumn{2}{c|}{Robustness results (\%)} & \multicolumn{2}{c|}{Average runtimes (s)}               \\ \cline{2-5} 
                             & \multicolumn{1}{c|}{Ours}  & $\alpha$-CROWN  & \multicolumn{1}{c|}{Ours} & $\alpha$-CROWN      \\ \hline
\makecell{$d \!=\! 245$ \\ $\delta \!=\! 0.01$}  & \multicolumn{1}{c|}{86.5} & 86.5 & \multicolumn{1}{c|}{0.809} & 0.319 \\ \hline
\makecell{$d \!=\! 230$ \\ $\delta \!=\! 0.015$} & \multicolumn{1}{c|}{80.5}      & 80.4  & \multicolumn{1}{c|}{1.420}   & 0.412 \\ \hline
\makecell{$d \!=\! 200$ \\ $\delta \!=\! 0.05$} & \multicolumn{1}{c|}{68.8}     &  66.7 & \multicolumn{1}{c|}{3.604}  & 0.655 \\ \hline
\end{tabular}
\caption{Robustness verification results for the large MNIST CNN on 1000 randomly perturbed image input.}\label{tab:2}
\end{table}

To evaluate the robustness of the MNIST CNNs, we apply a brightening attack \cite{gehr2018ai2}, which darkens parts of the input images. Similar to the approach in \cite{tran2020verification}, the perturbed input images are constructed by adjusting pixels with values greater than a threshold $d$ into an interval range $\li 0, 255 \times \delta \ri$, where $\delta \geq 0$ determines the size of the perturbation. Mathematically, given an image $I \in \mathbb{R}^{1\times 28 \times 28}$, the perturbed input image is represented as an interval $\mathcal{I}\triangleq \li \underline{I}, \overline{I} \ri \subset \mathbb{R}^{1\times 28 \times 28}$, where $\underline{I}_{[1,i,j]} = 0$ and $\overline{I}_{[1,i,j]} = 255 \times \delta$ if $I_{[1,i,j]} \geq d$, and $\underline{I}_{[1,i,j]} = \overline{I}_{[1,i,j]} = I_{[1,i,j]}$ otherwise, for $i,j = 1,2,\dots,28$.

We randomly select 1000 images from the MNIST dataset and construct 1000 corresponding perturbed input sets based on the threshold $d$ and perturbation radius $\delta$. Algorithm \ref{alg:reachFFNN} and Corollary \ref{cor:CNN} are then applied to compute the reachable sets for both CNNs. A CNN is considered robust against the brightening attack if the correctly classified output consistently maintains the maximum value across all other outputs within the computed reachable set. 
For comparison, we also use $\alpha$-CROWN \cite{Xu2021fast} with default settings to verify the robustness of the two MNIST CNNs. The success rates for robustness verification and the average computation times for each method are summarized in Table \ref{tab:1} and Table \ref{tab:2}. 
An example of the output ranges for the small MNIST CNN is illustrated in Figure \ref{fig:ranges}. The results show that the proposed method is less conservative and achieves an equal or higher verification success rate with comparable computation time. Notably, $\alpha$-CROWN represents CNN reachable sets as \emph{intervals}, whereas  our proposed method represent them as \emph{HZs}, a more general and expressive set representation. Although  $\alpha$-CROWN achieves shorter computation times, our HZ-represented reachable sets offer greater accuracy than interval-based reachability by yielding \emph{smaller set volumes}. This improvement enhances verification accuracy, particularly in the presence of large disturbances.


\subsection{Robustness Verification of CIFAR-10 CNN}
In the second example, we use a CNN trained on the CIFAR-10 colored image dataset from the benchmark in \cite{bak2021second}. The CNN consists of four
convolutional layers with up to 32768 neurons per layer and two fully-connected layers. Similar to the MNIST CNNs, the CIFAR-10 CNN classifies the input image $I \in \mathbb{R}^{3\times 32 \times 32}$ into ten categories. We assume each pixel of the image inputs is perturbed by a radius $\delta \in \{0.001,0.01\}$. Using the method developed in Section \ref{subsec:CNN}, the CIFAR-10 CNN is transformed into an equivalent FFNN. To enhance the efficiency of reachable set computation, Algorithm \ref{alg:identify} is employed to determine the neurons to be reduced during the reachable set computation in Algorithm \ref{alg:reachFFNN}.  Table \ref{tab:3} summarizes the number of neurons in each layer of the original CIFAR-10 CNN and the reduced network using Algorithm \ref{alg:identify}. Our neuron reduction approach achieves a 90 \% reduction in each layer of the transformed FFNN, significantly simplifying the subsequent reachability analysis. The average computation time for neuron reduction and reachable set calculation is 3.19 seconds.

\begin{table}[!t]
\centering
\begin{tabular}{|c|cccccc|}
\hline
                                  & \multicolumn{6}{c|}{Number of neurons}                                                                                                           \\ \hline
Layer                             & \multicolumn{1}{c|}{1}     & \multicolumn{1}{c|}{2}    & \multicolumn{1}{c|}{3}     & \multicolumn{1}{c|}{4}    & \multicolumn{1}{c|}{5}   & 6   \\ \hline
Original CNN                      & \multicolumn{1}{c|}{32768} & \multicolumn{1}{c|}{8192} & \multicolumn{1}{c|}{16384} & \multicolumn{1}{c|}{4096} & \multicolumn{1}{c|}{512} & 512 \\ \hline
\makecell{Reduced CNN \\ with $\delta = 0.001$} & \multicolumn{1}{c|}{2613}  & \multicolumn{1}{c|}{251}  & \multicolumn{1}{c|}{115}   & \multicolumn{1}{c|}{176}  & \multicolumn{1}{c|}{56}  & 84  \\ \hline
\makecell{Reduced CNN \\ with $\delta = 0.01$ } & \multicolumn{1}{c|}{2752}      & \multicolumn{1}{c|}{513}     & \multicolumn{1}{c|}{114}      & \multicolumn{1}{c|}{113}     & \multicolumn{1}{c|}{51}    &   269  \\ \hline
\end{tabular}
\caption{Neuron reduction results for CIFAR-10 CNN.}\label{tab:3}
\end{table}

\section{Conclusion}\label{sec:concl}
In this work, we proposed an efficient HZ-based approach for computing the reachable sets of CNNs. By expressing the convolution and pooling operations as linear mappings, we demonstrated that a CNN can be transformed into an equivalent FFNN. To enhance the efficiency of reachability analysis for the converted FFNN, we developed a flexible neural network reduction method that allows for closed-form over-approximations of the CNN's reachable sets in the form of HZs. 
The performance of the proposed approach was evaluated using two numerical examples.

\bibliographystyle{IEEEtran}
\bibliography{ref}

\end{document}